\newtheorem{thm}{Theorem}[section]
\newtheorem{cor}[thm]{Corollary}
\newtheorem{lem}[thm]{Lemma}
\newtheorem{prop}[thm]{Proposition}
\theoremstyle{definition}
\theoremstyle{remark}
\numberwithin{equation}{section}
\begin{document}
\title[Dynamic of abelian subgroups of $Aut(\mathbb{C}^{n})$]{On the dynamic
of\\
 holomorphic  diffeomorphisms  groups \\ fixing  a commune
point, on $\mathbb{C}^{n}$}

\author{Yahya N'dao and Adlene Ayadi}

 \address{Yahya N'dao, University of Moncton, Department of mathematics and statistics, Canada}
 \email{yahiandao@yahoo.fr }
\address{Adlene Ayadi, University of Gafsa, Faculty of sciences, Department of Mathematics,Gafsa, Tunisia.}

\email{adlenesoo@yahoo.com}

\thanks{This work is supported by the research unit: syst\`emes dynamiques et combinatoire:
99UR15-15} \subjclass[2000]{37C85, 47A16, 37E30, 37C25}
\keywords{holomorphe,  automorphisms, group, orbit, action}

\begin{abstract}
In this paper, we study the action on $\mathbb{C}^{n}$ of any
group $G$ of holomorphic diffeomorphisms (automorphisms) of
$\mathbb{C}^{n}$ fixing $0$. Suppose that there is $x\in
\mathbb{C}^{n}$,  having an orbit
 which generates $\mathbb{C}^{n}$ and also $\widetilde{E}(x)=\mathbb{C}^{n}$, where
 $\widetilde{E}(x)$ is the vector space generated by
$L_{G}=\{D_{0}fx,\ f\in G\}$. We give an important condition so
that an orbit $G(x)$ is isomorphic (by linear map) to the orbit
$L_{G}(x)$ of the linear group $L_{G}$. More if $G$ is abelian, we
prove the existence of a $G$-invariant open set $U$, dense in
$\mathbb{C}^{n}$, in which every orbit $O$ is relatively minimal
(i.e. $\overline{O}\cap U$ is a closed non empty, $G$-invariant
set and has no proper subset with these properties). Moreover, if
$G$ has a dense orbit in $\mathbb{C}^{n}$ then every orbit of $U$
is dense in $\mathbb{C}^{n}$.
\end{abstract}
\maketitle

\section{\bf Introduction }

By definition, diffeomorphisms of $\mathbb{C}^{n}$ which are
holomorphic are called automorphisms. Denote by
$Aut(\mathbb{C}^{n})$ the group of all automorphisms of
$\mathbb{C}^{n}$, it is very large and complicated for $n\geq 2$.
One attempt to study it, is to study its finite subgroups or more
general its subgroups which are isomorphic to compact Lie groups.
This paper is among the few who studied  the topological dynamic
of abelian  infinite subgroups of $Aut(\mathbb{C}^{n})$. Let $G$
be a subgroup of $Aut(\mathbb{C}^{n})$ fixing $0$. We consider the
action $G\times \mathbb{C}^{n} \longrightarrow \mathbb{C}^{n}$
given by $(f, x) \longmapsto f(x)$ (i.e the image of $x$ by $f$).
For a point $x\in\mathbb{C}^{n}$, denote by $G(x) =\{f(x), \ f\in
G\}\subset \mathbb{C}^{n}$ the orbit of $G$ through $x$. A subset
$E\subset\mathbb{C}^{n}$  is called $G$-invariant if $f(E)\subset
E$ for any $f\in G$; that is $E$ is a union of orbits. Denote by
$\overline{E}$ (resp. $\overset{\circ}{E}$ ) the closure (resp.
interior) of $E$.\medskip

A subset $E$ of  $\mathbb{C}^{n}$   is called \emph{a minimal set}
of  $G$  if  $E$ is closed in $ \mathbb{C}^{n}$,  non empty,
$G$-invariant and has no proper subset with these properties. It
is equivalent to say that  $E$  is a\ $G$-invariant set such that
every orbit contained in $E$ is dense in it. If  $\Omega$ is a
$G$-invariant set in  $ \mathbb{C}^{n}$,  we say that  $E$  is  a
\emph{minimal set in \ $\Omega$} if it is a minimal set of the
restriction  $G_{/\Omega}$  of $G$  to  $\Omega$. An orbit
$O\subset \Omega$  is called \emph{relatively minimal in $\Omega$}
if
 $ \overline{O}\cap \Omega$  is a minimal set in  $\Omega$.
  This means that for every $x\in \overline{O}\cap\Omega$ we have
$\overline{O}\cap\Omega=\overline{G(x)}\cap\Omega$. For example, a
closed orbit in  $\Omega$  is relatively minimal in $\Omega$. In
particular, every point in  $Fix (G)$  is relatively minimal in
$\mathbb{C}^{n}$.

 \medskip

In \cite{EAL}, the authors give a comprehensive introduction to
holomorphic dynamics, that is the dynamics induced by the
iteration of various analytic maps in complex number spaces. In
~\cite{EA2} the first named author studied the subgroup
$Aut_{1}(\mathbb{C}^{n})$ consisting of automorphisms with Jaccobi
determinant 1 (''volume-preserving automorphisms'') and in
~\cite{EALL}, the authors proved that for $n\geq 2$ shears
generate a dense subgroup of $Aut_{1}(\mathbb{C}^{n})$, but this
subgroup, at least $n=2$, is not the entire group
$Aut_{1}(\mathbb{C}^{n})$.

For a complex hyperbolic manifold $M$, denote by $Aut(M)$ be the
group of holomorphic automorphisms of $M$. Proper actions by
holomorphic transformations are found in abundance. A fundamental
result due to Kaup (see \cite{WK}) states that every closed
subgroup of $Aut(M)$ that preserves a continuous distance on M
acts properly on M. Thus, Lie groups acting properly and
effectively on M by holomorphic transformations are precisely
those closed subgroups of $Aut(M)$ that preserve continuous
distances on M. In particular, if M is a Kobayashi-hyperbolic
manifold, then $Aut(M)$ is a Lie group acting properly on M (see
also \cite{SK}).  Recent results concerning the dynamics of
holomorphic diffeomorphisms of compact complex surfaces are
described in \cite{SC}, \cite{SCD} and \cite{SCDP}, that require a
nice interplay between algebraic geometry, complex analysis, and
dynami- cal systems.
\medskip

In \cite{FB1} and \cite{FB2}, the author proved that if $f$ and
$g$ are two commuting holomorphic maps of the unit disc
$\triangle$, then they must share a fixed point (in the sense of
non-tangential limit if this point is on the boundary). This
result has been generalized to domains in several variables by the
author, proving that a generic characteristic of commuting
holomorphic mappings is that of sharing a "fixed point" in the
closure of the domain.

\medskip

 In this paper, we give an important condition so
that an orbit $G(x)$ is isomorphic (by linear map) to the orbit
$L_{G}(x)$ of the linear group $L_{G}$. The abelian case can be
viewed as a generalization  of the results given in \cite{aAhM05}.
We use a construction analogous to that given by S.Chihi in
\cite{Ch} for abelian linear group.

\medskip

Denote by $L_{G}=\{D_{0}f,\ f\in G\}$, it is a  subgroup of $GL(n,
\mathbb{C})$ (see Lemma ~\ref{L:008}) and $vect(L_{G})$ be the
vector space generated by $L_{G}$. For every $u\in
\mathbb{C}^{n}$, denote by:\
\\ - $L_{G}(u)=\{Au,\ A\in L_{G}\}$ the orbit of $u$ defined by
the natural action of the linear group $L_{G}$ on
$\mathbb{C}^{n}$. \
\\ - $\widetilde{E}(u)$ be the vector space generated by $L_{G}(u)$.
\ \\ -$\mathcal{A}(G)$ be the vector space generated by $G$, we
see that $\mathcal{A}(G)$ is not an algebra (i.e. it is not stable
by composition).\
\\ - $E(x)=\{f(x),\ \ f\in \mathcal{A}(G)\}$.\ \\ - For a fixed
point $x\in \mathbb{C}^{n}$, we define the linear map $\Phi_{x}:
\mathcal{A}(G)\longrightarrow \mathbb{C}^{n}$ given by
$\Phi_{x}(f)=f(x)$, $f\in \mathcal{A}(G)$.\ \\ - The linear map
$\varphi: \mathcal{A}(G)\longrightarrow M_{n}(\mathbb{C})$ the
linear map given by $\varphi(f)=D_{0}f$, for every $f\in
\mathcal{A}(G)$. \medskip

The group $G$ is called \emph{dominant} if there is a point $x\in
\mathbb{C}^{n}$ such that each set $G(x)$ and $L_{G}(x)$ generates
$\mathbb{C}^{n}$ and $ker(\varphi)= Ker(\Phi_{x})$(i.e.  $G$ is
dominant if $\widetilde{E}(x)=E(x)=\mathbb{C}^{n}$ for some $x\in
\mathbb{C}^{n}$ and $ker(\varphi)= Ker(\Phi_{x})$).

\medskip

 We generalize the
result given in \cite{aAhM05} for abelian subgroup of $GL(n,
\mathbb{C})$ in the following Theorem:
\medskip

\begin{thm}\label{T:001}  Let $G$ be a  dominant subgroup
 of  $Aut(\mathbb{C}^{n})$,  with $0\in Fix(G)$. Let $x\in
 \mathbb{C}^{n}$ such that $Ker(\Phi_{x})=Ker(\varphi)$ then $G(x)$
 is isomorphic (by a linear map) to $L_{G}(x)$.
\end{thm}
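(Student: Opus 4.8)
The plan is to exploit the two linear maps $\Phi_{x}$ and $\varphi$ emanating from the common vector space $\mathcal{A}(G)$, and to transport the orbits through the quotient by their (shared) kernel. First I would record that both maps are genuinely linear on $\mathcal{A}(G)$: since $D_{0}$ is linear, $Im(\varphi)=vect(L_{G})$, and since evaluation at $x$ is linear, $Im(\Phi_{x})=E(x)$. The hypothesis $Ker(\Phi_{x})=Ker(\varphi)$ then lets me define a map $\psi$ on $vect(L_{G})=Im(\varphi)$ by the rule $\psi(D_{0}f)=f(x)$, that is $\psi\circ\varphi=\Phi_{x}$; well-definedness requires $Ker(\varphi)\subseteq Ker(\Phi_{x})$ and injectivity requires the reverse inclusion, so the equality of kernels delivers both at once. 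The first isomorphism theorem now gives $vect(L_{G})\cong \mathcal{A}(G)/Ker(\varphi)=\mathcal{A}(G)/Ker(\Phi_{x})\cong E(x)$, and dominance ($E(x)=\mathbb{C}^{n}$) upgrades $\psi$ to a linear isomorphism $\psi:vect(L_{G})\To\mathbb{C}^{n}$. In particular $\dim vect(L_{G})=n$.

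Second, I would introduce the evaluation map $\theta_{x}:vect(L_{G})\To\mathbb{C}^{n}$ defined by $\theta_{x}(A)=Ax$, whose image is precisely $\widetilde{E}(x)$. By dominance $\widetilde{E}(x)=\mathbb{C}^{n}$, so $\theta_{x}$ is surjective; and since its source $vect(L_{G})$ and its target $\mathbb{C}^{n}$ both have dimension $n$, $\theta_{x}$ is in fact a linear isomorphism. At this stage the two orbits acquire clean descriptions straight from the definitions: $G(x)=\{f(x):f\in G\}=\psi(L_{G})$, because $\psi(D_{0}f)=f(x)$ and $L_{G}=\{D_{0}f:f\in G\}$, while $L_{G}(x)=\{Ax:A\in L_{G}\}=\theta_{x}(L_{G})$. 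Thus $\psi$ and $\theta_{x}$ are two isomorphisms out of the \emph{same} $n$-dimensional space $vect(L_{G})$, one carrying $L_{G}$ to $G(x)$ and the other carrying $L_{G}$ to $L_{G}(x)$.

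Finally I would set $T:=\psi\circ\theta_{x}^{-1}:\mathbb{C}^{n}\To\mathbb{C}^{n}$, a composition of linear isomorphisms and hence itself a linear isomorphism. Because $\theta_{x}$ is injective, $\theta_{x}^{-1}(\theta_{x}(L_{G}))=L_{G}$, so $T(L_{G}(x))=\psi(L_{G})=G(x)$. Consequently $T$ is the desired linear isomorphism of $\mathbb{C}^{n}$ carrying $L_{G}(x)$ onto $G(x)$ (equivalently $T^{-1}$ carries $G(x)$ onto $L_{G}(x)$), which is exactly the claim that $G(x)$ is isomorphic by a linear map to $L_{G}(x)$.

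The step I expect to be the main obstacle is securing that $\theta_{x}$ is invertible, i.e.\ that $\widetilde{E}(x)=\mathbb{C}^{n}$, rather than merely that $\psi$ is an isomorphism. The kernel equality $Ker(\Phi_{x})=Ker(\varphi)$ together with dominance already forces $\dim vect(L_{G})=n$ and hence $E(x)=\mathbb{C}^{n}$, but the surjectivity of the \emph{evaluation} map $\theta_{x}$ is a second, independent input where dominance must be used again; here I would read the hypothesis on $x$ in conjunction with dominance so that $x$ witnesses $\widetilde{E}(x)=\mathbb{C}^{n}$. Everything outside this point is routine linear algebra, the essential insight being the factorization $\psi\circ\varphi=\Phi_{x}$ that makes $\psi$ and $\theta_{x}$ comparable isomorphisms on the single space $vect(L_{G})$.
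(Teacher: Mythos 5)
Your proof is correct and produces, in the end, exactly the paper's isomorphism: your $T^{-1}=\theta_{x}\circ\psi^{-1}$ is the map $\varphi_{x}=\widetilde{\Phi}_{x}\circ\varphi_{1}\circ\Phi_{x}^{-1}$ of Lemma~\ref{LL+L:000}, with $\theta_{x}$ playing the role of $\Psi_{x}$ and $\psi$ the role of $\Phi_{x}\circ\varphi_{1}^{-1}$. The route, however, is genuinely leaner. The paper builds $\varphi_{x}$ through a \emph{common algebraic supplement} $F_{x}$ of $Ker(\Phi_{x})$ and $Ker(\varphi)$ in $\mathcal{A}(G)$, and to secure it develops Hamel bases and norms, proves continuity of $\Phi_{x}$ and $\varphi$, invokes closedness of the kernels, a topological-sum lemma and the combinatorial common-supplement Lemma~\ref{LL:aaa010+}. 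Your factorization $\psi\circ\varphi=\Phi_{x}$ through the quotient $\mathcal{A}(G)/Ker(\varphi)$ makes all of that unnecessary --- indeed, under the hypothesis $Ker(\Phi_{x})=Ker(\varphi)$ the two kernels coincide, so any supplement of one is automatically a supplement of the other and the whole common-supplement apparatus is moot. A second genuine difference: you get injectivity of $\theta_{x}$ purely by dimension count ($\dim vect(L_{G})=n$ follows from the kernel equality together with $E(x)=\mathbb{C}^{n}$), whereas the paper's Lemma~\ref{L:0000a0a0} proves injectivity of $\Psi_{x}$ using commutativity of $L_{G}$; since Theorem~\ref{T:001} does not assume $G$ abelian, your argument is actually better adapted to the stated generality. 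Finally, the obstacle you single out --- that surjectivity of $\theta_{x}$ needs $\widetilde{E}(x)=\mathbb{C}^{n}$, which does not follow from $Ker(\Phi_{x})=Ker(\varphi)$ alone --- is a real mismatch between the theorem's statement and its proof, but it is the paper's mismatch, not yours: Lemma~\ref{LL+L:000} silently adds the hypothesis $r(x)=\widetilde{r}_{x}=n$, and your reading of $x$ as the dominance witness is the same repair. No gap beyond what is already in the paper.
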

\medskip

If $G$ is abelian, we can better describe its dynamics as follow:

\begin{thm}\label{T:1}  Let $G$ be an abelian dominant subgroup
 of  $Aut(\mathbb{C}^{n})$,  with $0\in Fix(G)$.
Then there exist a $G$-invariant open set $U$, dense in
$\mathbb{C}^{n}$ satisfying: For every $y\in U$ such that
$Ker(\Phi_{y})=Ker(\varphi)$ then $G(y)$ is relatively minimal in
$U$ .
\end{thm}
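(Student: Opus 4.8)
The plan is to reduce the nonlinear problem to the already-understood dynamics of the abelian \emph{linear} group $L_G$, and then transport the resulting minimal structure orbit by orbit through the linear isomorphism supplied by Theorem~\ref{T:001}. Since $G$ is abelian and $D_0$ is a homomorphism (Lemma~\ref{L:008}), the group $L_G\subset GL(n,\mathbb C)$ is again abelian, so the classification of orbit closures for abelian linear subgroups of $GL(n,\mathbb C)$ from \cite{aAhM05} applies. It furnishes an $L_G$-invariant, open, dense set $V\subset\mathbb C^{n}$ in which every linear orbit is relatively minimal, i.e.\ $\overline{L_G(u)}\cap V$ is an $L_G$-minimal set for each $u\in V$. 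This $V$, together with the normal form of $L_G$ underlying it (in the spirit of the construction of S.~Chihi \cite{Ch}), is the template from which I would build $U$.

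Next I would set up an orbit-wise conjugacy. For a good point $y$, i.e.\ one with $\operatorname{Ker}(\Phi_y)=\operatorname{Ker}(\varphi)$, Theorem~\ref{T:001} provides a linear isomorphism $\ell_y$ of $\mathbb C^{n}$ carrying $L_G(y)$ onto $G(y)$ and satisfying $\ell_y\big((D_0 f)\,y\big)=f(y)$ for every $f\in\mathcal A(G)$. The decisive new ingredient is the equivariance of $\ell_y$: for $g\in G$ and $u=(D_0 f)\,y\in L_G(y)$, the group law and the chain rule give
\[
g\big(\ell_y(u)\big)=(g\circ f)(y)=\ell_y\big(D_0(g\circ f)\,y\big)=\ell_y\big((D_0 g)\,u\big),
\]
so that $g\circ\ell_y=\ell_y\circ(D_0 g)$ on $L_G(y)$, and hence, by continuity and the $L_G$-invariance of $\overline{L_G(y)}$, on all of $\overline{L_G(y)}$. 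Thus $\ell_y$ is a homeomorphism of $\overline{L_G(y)}$ onto $\overline{G(y)}$ intertwining the two actions. Consequently, for $y\in V$ the $L_G$-minimality of $\overline{L_G(y)}\cap V$ transfers verbatim: $\ell_y\big(\overline{L_G(y)}\cap V\big)=\overline{G(y)}\cap\ell_y(V)$ is $G$-minimal in $\ell_y(V)$, so $G(y)$ is relatively minimal in $\ell_y(V)$.

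It then remains to manufacture, from the family $\{\ell_y(V)\}$, a single open dense set $U$ that is $G$-invariant and for which $\overline{G(y)}\cap U$ coincides with the minimal piece $\overline{G(y)}\cap\ell_y(V)$ for every good $y$. I would define $U$ intrinsically through the linear model, namely as the locus where the $L_G$-coordinates (in the normal form behind $V$) of the relevant orbit do not degenerate. Openness and density of $U$ would then follow immediately from those of $V$, and the identity $U\cap\overline{G(y)}=\ell_y\big(V\cap\overline{L_G(y)}\big)$ would hold orbit by orbit, whereupon the minimality conclusion for each good $y\in U$ is exactly the statement of the previous paragraph.

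The main obstacle is precisely this globalisation step. The conjugacy $\ell_y$ depends on the base point $y$, and the equivariance $g\circ\ell_y=\ell_y\circ(D_0 g)$ is valid only along the single orbit closure $\overline{L_G(y)}$, never on all of $\mathbb C^{n}$ (a global intertwiner would linearise the nonlinear map $g$, which is false in general). Hence $G$-invariance of $U$ cannot be read off from $L_G$-invariance of $V$ by a naive push-forward, and one cannot simply set $U=\ell_{x}(V)$ for a fixed dominant $x$. I expect to overcome this by exploiting that $G$ is abelian and that, by dominance, the orbit $G(x)$ already spans $\mathbb C^{n}$: the commutation relations allow one to compare $\ell_y$ with $\ell_{g(y)}$ for $g\in G$ and to prove that the degeneracy locus defining $\mathbb C^{n}\setminus U$ is closed, $G$-invariant, and of empty interior, so that each $g\in G$ permutes the sets $\overline{G(y)}\cap U$. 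Verifying that this degeneracy locus has all three properties simultaneously is where the real work of the proof lies.
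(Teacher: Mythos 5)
Your first two steps track the paper: reduce to the abelian linear group $L_{G}$ (which is abelian by Lemma~\ref{L:008}, so the relative-minimality results of \cite{aAhM05} and \cite{Ch} apply on a dense open set), and conjugate a single $G$-orbit to an $L_{G}$-orbit via the linear isomorphism of Theorem~\ref{T:001}. The genuine gap is exactly where you place it yourself: the globalisation. You never actually produce the set $U$, never prove it is $G$-invariant, open and dense, and never establish the identity $\overline{G(y)}\cap U=\ell_{y}\bigl(\overline{L_{G}(y)}\cap V\bigr)$; your final paragraph is a list of properties you ``expect'' to verify, so what you have is a programme, not a proof. Note also that relative minimality of $G(y)$ in $\ell_{y}(V)$ --- a set that changes with $y$ and need not be $G$-invariant --- is not the assertion of the theorem, which concerns one fixed $U$.

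The paper closes this hole with two devices absent from your plan. First, $U$ is defined intrinsically as $\Omega_{n}\cap\widetilde{\Omega}_{n}$, where $\Omega_{n}=\{y:\dim E(y)=n\}$ and $\widetilde{\Omega}_{n}=\{y:\dim\widetilde{E}(y)=n\}$; openness and $G$-invariance of $\Omega_{n}$ come from a Gram-determinant argument (Proposition~\ref{p:100}) and those of $\widetilde{\Omega}_{n}$ from the linear theory (Lemma~\ref{L:10}). Second --- and this is the decisive ingredient --- Lemma~\ref{LL+L:000}(ii) asserts that the \emph{single} map $\varphi_{x}$ attached to the base point satisfies $\varphi_{x}(G(y))=L_{G}(\varphi_{x}(y))$ for \emph{every} $y\in\mathbb{C}^{n}$, and Lemma~\ref{L:LLLL} that $\varphi_{x}(\Omega_{n})=\widetilde{\Omega}_{n}$; with these, Chihi's statement $\overline{L_{G}(z)}\cap\widetilde{\Omega}_{n}=\overline{L_{G}(x)}\cap\widetilde{\Omega}_{n}$ (Lemma~\ref{L:aaaa1}) pulls back in one stroke to $\overline{G(y)}\cap U=\overline{G(x)}\cap U$. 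This is precisely the ``global intertwiner'' you dismiss as impossible. Your objection is not frivolous --- the pointwise identity $\varphi_{x}\circ f=D_{0}f\circ\varphi_{x}$ on all of $\mathbb{C}^{n}$ would indeed linearise $f$, and the paper's proof of Lemma~\ref{LL+L:000}(ii) applies the relation $\varphi_{x}(h(x))=D_{0}h(x)$ to $h=f\circ g$, which need not lie in the vector space $\mathcal{A}(G)$ --- but within your own approach you offer no substitute for this step, so the theorem is not proved.
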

\medskip

 We have the following corollaries.

\begin{cor}\label{C:4} Let $G$ be an abelian dominant subgroup of  $Aut(\mathbb{C}^{n})$,
with $0\in Fix(G)$. If $G$ has a dense orbit then every orbit in
$U$ is dense in $\mathbb{C}^{n}$.
\end{cor}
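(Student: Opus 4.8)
The plan is to deduce the corollary from Theorem~\ref{T:1} by showing that, under the dense-orbit hypothesis, the whole set $U$ is a single minimal set of $G_{/U}$. Recall from the introduction that a minimal set in $U$ is characterized intrinsically: every orbit contained in it is dense in it. Hence, once $U$ itself is shown to be a minimal set in $U$, every orbit $G(y)$ with $y\in U$ satisfies $\overline{G(y)}\cap U=U$, i.e. $U\subseteq\overline{G(y)}$; since $U$ is dense this gives $\overline{G(y)}\supseteq\overline{U}=\mathbb{C}^{n}$, so $G(y)$ is dense in $\mathbb{C}^{n}$. Thus the corollary reduces to the single assertion that $U$ is a minimal set in $U$, and I emphasize that \emph{this covers every orbit of $U$ at once, including those not satisfying the kernel condition}.

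To produce minimality of $U$, I would first locate a dense orbit lying in the regime where Theorem~\ref{T:1} applies. Let $G(a)$ be a dense orbit. Since $U$ is a nonempty open set and $G(a)$ is dense, $G(a)\cap U\neq\emptyset$; choosing $y_{0}\in G(a)\cap U$ we have $G(y_{0})=G(a)$, so $G(y_{0})$ is dense and $\overline{G(y_{0})}\cap U=U$. If $y_{0}$ can be taken with $Ker(\Phi_{y_{0}})=Ker(\varphi)$, then Theorem~\ref{T:1} makes $G(y_{0})$ relatively minimal in $U$, so $\overline{G(y_{0})}\cap U=U$ is a minimal set in $U$; that is exactly the assertion that $U$ is minimal, and the reduction above finishes the proof.

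A useful preliminary observation is that the condition $Ker(\Phi_{y})=Ker(\varphi)$ is $G$-invariant, so it is an orbit property and it is enough that the \emph{single} dense orbit satisfy it. Indeed, for $g\in G$ and $f\in\mathcal{A}(G)$ one has $f\circ g\in\mathcal{A}(G)$ (since $G$ is a group and composition extends linearly), and $\Phi_{g(y)}(f)=f(g(y))=(f\circ g)(y)=\Phi_{y}(f\circ g)$, so $\Phi_{g(y)}=\Phi_{y}\circ\rho_{g}$ with $\rho_{g}(f)=f\circ g$ a linear automorphism of $\mathcal{A}(G)$. Moreover $\varphi(f\circ g)=D_{0}f\cdot D_{0}g$ with $D_{0}g$ invertible, so $Ker(\varphi)$ is $\rho_{g}$-stable; combining these, $Ker(\Phi_{g(y)})=\rho_{g}^{-1}(Ker(\Phi_{y}))$ equals $Ker(\varphi)$ whenever $Ker(\Phi_{y})=Ker(\varphi)$. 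Hence the locus where Theorem~\ref{T:1} applies is a union of orbits.

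The main obstacle is exactly to guarantee that a dense orbit meets this locus, i.e. that the dense orbit can be chosen through a point with $Ker(\Phi_{y_{0}})=Ker(\varphi)$. Equivalently, by $G$-invariance, it suffices to show that the dominant orbit $G(x)$ (which satisfies the kernel condition by hypothesis) is itself dense as soon as $G$ has \emph{some} dense orbit. Here I would pass to the linearization: Theorem~\ref{T:001} provides a linear isomorphism $T$ with $T(L_{G}(x))=G(x)$, and being linear $T$ is a homeomorphism of $\mathbb{C}^{n}$, so $G(x)$ is dense if and only if $L_{G}(x)$ is dense. It then remains to argue, from the abelian-linear dynamics of $L_{G}$ in the spirit of \cite{aAhM05} and \cite{Ch}, that the existence of a dense $G$-orbit forces $L_{G}(x)$ to be dense; this transfer between the nonlinear group and its linear part is the delicate point, since a priori a dense orbit might avoid the (thin but $G$-invariant) set on which the kernel condition holds. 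Once $G(x)$ is known to be dense, taking $y_{0}=x$ above yields minimality of $U$ and hence the corollary.
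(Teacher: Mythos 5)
Your reduction is the one the paper uses: identify $\overline{O}\cap U$ with $U$ for a dense orbit $O$ meeting $U$, invoke Theorem~\ref{T:1} to make that set minimal in $U$, and conclude that every orbit contained in the $G$-invariant set $U$ is dense in $U$, hence in $\mathbb{C}^{n}$. Your preliminary observation that the condition $Ker(\Phi_{y})=Ker(\varphi)$ is constant along orbits (via $\Phi_{g(y)}=\Phi_{y}\circ\rho_{g}$ with $\rho_{g}(f)=f\circ g$ a linear automorphism of $\mathcal{A}(G)$ preserving $Ker(\varphi)$, since $\varphi(f\circ g)=D_{0}f\cdot D_{0}g$ with $D_{0}g$ invertible) is correct and is a genuine addition not present in the paper; likewise your use of openness and density to get $G(a)\cap U\neq\emptyset$ is cleaner than the paper's claim that $O\subset U$ follows from $r(x)=n$ alone (the set $U=\Omega_{n}\cap\widetilde{\Omega}_{n}$ also requires $\widetilde{r}_{x}=n$).

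However, as you yourself acknowledge, your argument is not complete: the step of producing a point $y_{0}$ on a dense orbit with $Ker(\Phi_{y_{0}})=Ker(\varphi)$ --- which is precisely the hypothesis needed to apply Theorem~\ref{T:1} --- is left open, and your fallback route (showing that the dominant orbit $G(x)$ must itself be dense whenever some orbit is dense) is also left unproved; Theorem~\ref{T:001} only converts that question into the density of $L_{G}(x)$, it does not answer it. This is a genuine gap. You should be aware that the paper's own proof does not address it either: it simply writes ``Since $O$ is relatively minimal in $U$ (Theorem~\ref{T:1})'' for an arbitrary dense orbit $O$, without verifying the kernel condition at any point of $O$. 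So the obstacle you isolated is real, and it is a defect of the published argument as much as of yours; to turn your proposal into a proof you would need to close it, for instance by showing that the $G$-invariant locus $\{y\in U:\ Ker(\Phi_{y})=Ker(\varphi)\}$ is dense (so that a dense orbit must meet it), or by proving directly that density of some orbit forces density of the dominant orbit.
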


\medskip

The proofs in this paper will use the results of the action of the
linear Abelian groups on $\mathbb{C}^{n}$, that will be applied to
the linear group $\{D_{0}f,\ f \in G\}$. The complex analysis is
not used here. This paper is organized as follows: In Section 2,
we give some results for abelian linear group. The Section 3 is
devoted to prove the main results. In the section 4, we give two
examples for $n=2$.
\bigskip

\bigskip

\section{\bf Proof of Theorem~\ref{T:001}}
\
Let $G$ be a dominant subgroup of $Aut(\mathbb{C}^{n})$ of
$\mathbb{C}^{n}$ fixing $0$. Denote by:\
\\ -  $C^{1}(\mathbb{C}^{n},
\mathbb{C}^{n})$ the vector space of all $C^{1}$-differentiable
maps of $\mathbb{C}^{n}$, it is well known that
$C^{1}(\mathbb{C}^{n}, \mathbb{C}^{n})$ is an algebra.\
\\
 - $\mathcal{A}(G)$ be the vector space generated by $G$. (i.e. The
smaller vector subspace of $C^{1}(\mathbb{C}^{n}, \mathbb{C}^{n})$
containing $G$ and stable by compositions).\
\\ - For a fixed point $x\in \mathbb{C}^{n}$, we define the linear
map $\Phi_{x}: \mathcal{A}(G)\longrightarrow \mathbb{C}^{n}$ given
by $\Phi_{x}(f)=f(x)$, $f\in \mathcal{A}(G)$.\ \\ -
$E(x)=\Phi_{x}(\mathcal{A}(G))$.\ \\

\begin{lem}\label{L:2} Let $G$ be an abelian subgroup of $Aut(\mathbb{C}^{n})$, fixing $0$.
Then $g(0)=0$ for every $g\in\mathcal{A}(G)$.
\end{lem}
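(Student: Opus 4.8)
The plan is to produce a concrete vector subspace of $C^{1}(\mathbb{C}^{n},\mathbb{C}^{n})$ that already builds in the desired conclusion, that contains $G$, and that is stable under composition; then I would invoke the fact that $\mathcal{A}(G)$ was \emph{defined} as the smallest composition-stable vector subspace containing $G$, so any such competitor must contain $\mathcal{A}(G)$. Concretely, I would set
$$W=\{f\in C^{1}(\mathbb{C}^{n},\mathbb{C}^{n}):\ f(0)=0\},$$
the space of all $C^{1}$ self-maps of $\mathbb{C}^{n}$ fixing the origin.

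First I would check that $W$ is a vector subspace: if $f,g\in W$ and $a,b\in\mathbb{C}$, then $(af+bg)(0)=af(0)+bg(0)=0$, so $af+bg\in W$. Next I would verify the crucial closure property, namely that $W$ is stable under composition: if $f,g\in W$, then $(f\circ g)(0)=f(g(0))=f(0)=0$, so $f\circ g\in W$. The single nontrivial input here is that the origin is a common fixed point, used twice in that one line. Then I would observe that $G\subseteq W$: by hypothesis $0\in Fix(G)$, so every $g\in G$ satisfies $g(0)=0$, i.e.\ $g\in W$.

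With these three facts in hand the conclusion is immediate. Since $\mathcal{A}(G)$ is, by definition, the smallest vector subspace of $C^{1}(\mathbb{C}^{n},\mathbb{C}^{n})$ containing $G$ and stable under composition, and since $W$ is such a subspace, minimality forces $\mathcal{A}(G)\subseteq W$; hence $g(0)=0$ for every $g\in\mathcal{A}(G)$, which is the assertion. I do not expect any genuine obstacle: the entire content is the recognition that the property ``vanishing at $0$'' is simultaneously preserved by linear combinations and by composition, so that $W$ is a legitimate competitor in the minimization defining $\mathcal{A}(G)$. I would also remark that the abelianness of $G$ is not actually used in this lemma; only the common fixed point $0$ is needed.
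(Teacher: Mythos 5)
Your proof is correct, but it takes a different route from the paper's. The paper's own proof is a one-line direct computation: it writes a generic element of $\mathcal{A}(G)$ as a finite linear combination $g=\sum_{k=1}^{p}\alpha_{k}f_{k}$ with $f_{k}\in G$ and evaluates at $0$, using only the definition of $\mathcal{A}(G)$ as the linear span of $G$ (which is the definition given in the introduction, where the authors explicitly say $\mathcal{A}(G)$ is \emph{not} stable under composition). You instead exhibit the competitor $W=\{f:\ f(0)=0\}$ and invoke minimality, which is the right move if one takes the conflicting definition stated in Section 2 (``the smallest vector subspace containing $G$ and stable by compositions''). The paper is genuinely ambiguous on this point, and your argument is the more robust one: since $W$ is simultaneously a vector subspace, composition-stable, and contains $G$, it contains the span of $G$ as well as the composition-stable hull, so your proof works under either reading, whereas the paper's proof silently commits to the span-only definition. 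Your closing remark that abelianness is never used is also accurate and consistent with the paper's own argument. The only thing your write-up does not buy that the paper's does is an explicit description of elements of $\mathcal{A}(G)$, but that is not needed for this lemma.
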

\medskip

\begin{proof} Let $g=\underset{k=1}{\overset{p}{\sum}}
\alpha_{k}f_{k}\subset \mathcal{A}(G)$ with $f_{k}\in G$ and
$\alpha_{k}\in \mathbb{C}$, so
$$g(0)=\underset{k=1}{\overset{p}{\sum}} \alpha_{k}f_{k}(0)=0.$$
\end{proof}
\medskip

Denote by $\varphi: \mathcal{A}(G)\longrightarrow
M_{n}(\mathbb{C})$ the linear map given by $\varphi(f)=D_{0}f$,
for every $f\in \mathcal{A}(G)$. One observes that
$L_{G}=\varphi(G)$.\medskip

\begin{lem}\label{L:008} Let $G$ be a subgroup of $Aut(\mathbb{C}^{n})$,
 fixing $0$. Then  $L_{G}$ is a  subgroup of \ $GL(n,\mathbb{C})$.
 Moreover, if $G$ is abelian so is $L_{G}$.
\end{lem}
\medskip

\begin{proof} Since $L_{G}=\varphi(G)$, so it suffices to show that $\varphi$ is a morphism. Let
$f,g\in G$, so $\varphi(f\circ g)=D(f\circ g)(0)=Df(g(0)). Dg(0)$.
By Lemma~\ref{L:2}, $g(0)=0$, so $\varphi(f\circ g)=D(f)(0).
Dg(0)=\varphi(f). \varphi(g)$. The proof is completed.
\end{proof}

\medskip

\subsection{{\bf Hamel basis and norm}}\cite{CDAKCB}
The main of this section is to justify the existence of a basis of
every vector space. This result is trivial in the finite case, is
in fact rather surprising when one thinks of infinite dimensionial
vector spaces, and the definition of a basis. Recall that a
\emph{Hamel basis} or simply a basis of a vector space $E$ is a
linearly independent set $\mathcal{B}$ (every finite subset of
$\mathcal{B}$ is linearly independant) such that for each nonzero
$x\in E$ there are $a_{1},\dots, a_{k}\in \mathcal{B}$ and nonzero
scalars $\alpha_{1},\dots,  \alpha_{k}$ (all uniquely determined)
such that $x=\underset{i=1}{\overset{k}{\sum}}\alpha_{i}a_{i}$.
 The following theorem is equivalent to the
axiom of choice family of axioms and theorems. In \cite{CDAKCB},
C.D.Aliprantis and K.C.Border  proved, in the following theorem,
that Zorn's lemma implies that every vector space has a basis.
\bigskip

\begin{thm}\label{T:aaa000a}$($\cite{CDAKCB}, Theorem 1.8$)$ Every
nontrivial vector space has a Hamel basis.
\end{thm}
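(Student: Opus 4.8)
The plan is to obtain the basis as a maximal linearly independent subset, produced via Zorn's lemma. Let $E$ be the nontrivial vector space and let $\mathcal{L}$ denote the collection of all linearly independent subsets of $E$, partially ordered by inclusion. Since $E$ is nontrivial there is a nonzero vector $v\in E$, and $\{v\}$ is linearly independent, so $\mathcal{L}\neq\emptyset$.

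First I would verify the hypothesis of Zorn's lemma, namely that every chain in $\mathcal{L}$ admits an upper bound inside $\mathcal{L}$. Given a totally ordered family $\mathcal{C}\subseteq\mathcal{L}$, set $U=\bigcup_{C\in\mathcal{C}}C$; this is clearly an upper bound for $\mathcal{C}$ in the inclusion order, so the point is to check that $U$ is itself linearly independent. Here I would use that linear independence has finite character: any nontrivial relation among elements of $U$ involves only finitely many vectors $a_{1},\dots,a_{m}$, each of which lies in some member $C_{i}$ of $\mathcal{C}$; since $\mathcal{C}$ is totally ordered, one of the $C_{i}$ contains all the others, hence contains $a_{1},\dots,a_{m}$, and the linear independence of that single member forbids the relation. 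Thus $U\in\mathcal{L}$.

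Zorn's lemma then furnishes a maximal element $\mathcal{B}$ of $\mathcal{L}$, and it remains to show $\mathcal{B}$ is a Hamel basis. It is linearly independent by membership in $\mathcal{L}$. To see that it spans $E$, suppose some $x\in E$ were not a finite linear combination of elements of $\mathcal{B}$. Then $\mathcal{B}\cup\{x\}$ would still be linearly independent --- in any vanishing combination the coefficient of $x$ must be zero, lest $x$ be expressed through $\mathcal{B}$, and the remaining combination then vanishes by independence of $\mathcal{B}$ --- contradicting the maximality of $\mathcal{B}$. Hence every nonzero element of $E$ has a representation $x=\sum_{i=1}^{k}\alpha_{i}a_{i}$ with $a_{i}\in\mathcal{B}$ and nonzero scalars $\alpha_{i}$, and uniqueness of this representation follows at once from the linear independence of $\mathcal{B}$.

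The only genuinely delicate step is the chain argument: the union of an \emph{arbitrary} family of linearly independent sets can certainly fail to be independent, so the whole proof hinges on exploiting the totality of the chain through the finite-character property above. Everything else is bookkeeping, and the appeal to Zorn's lemma is exactly the point at which the axiom of choice enters, consistent with the remark that this statement is equivalent to it.
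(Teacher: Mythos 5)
Your proof is correct and is precisely the standard Zorn's lemma argument (maximal linearly independent set via the finite-character property of independence on chains), which is exactly the approach the paper attributes to the cited reference of Aliprantis and Border; the paper itself offers no independent proof, only the citation. Nothing further is needed.
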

\medskip

As a consequence, we found the important following results:

\begin{thm}\label{T:TTT00a} Every
nontrivial vector space has a norm called Hamel norm.
\end{thm}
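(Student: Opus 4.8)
The plan is to manufacture an $\ell^{1}$-type norm directly from the Hamel basis guaranteed by Theorem~\ref{T:aaa000a}. First I would fix a Hamel basis $\mathcal{B}=\{a_{i}\}_{i\in I}$ of the nontrivial vector space $E$; its existence is exactly the content of Theorem~\ref{T:aaa000a}. By the definition of a Hamel basis recalled above, every nonzero $x\in E$ admits a \emph{unique} finite expansion $x=\sum_{j=1}^{k}\alpha_{j}a_{i_{j}}$ with distinct $a_{i_{j}}\in\mathcal{B}$ and nonzero scalars $\alpha_{j}$. I would then define $\norm{x}=\sum_{j=1}^{k}\abs{\alpha_{j}}$ for $x\neq 0$ and $\norm{0}=0$. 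It is precisely the uniqueness of the expansion that makes this assignment well defined.

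Next I would verify the three norm axioms. Positive definiteness is immediate: if $x\neq 0$ then at least one coefficient $\alpha_{j}$ is nonzero, so $\norm{x}>0$, while $\norm{0}=0$ by convention. Homogeneity holds because scaling $x$ by $\lambda$ scales every coefficient by $\lambda$, whence $\norm{\lambda x}=\sum_{j}\abs{\lambda\alpha_{j}}=\abs{\lambda}\,\norm{x}$, the case $\lambda=0$ being trivial.

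The only step requiring a little care is the triangle inequality, and here the finiteness and uniqueness of the expansions must be used cleanly. The idea is to write both vectors over a \emph{common} finite set of basis elements: take the finite subset $\{a_{i_{1}},\dots,a_{i_{m}}\}\subset\mathcal{B}$ consisting of all basis vectors appearing in the expansion of $x$ or of $y$, and write $x=\sum_{j=1}^{m}\alpha_{j}a_{i_{j}}$ and $y=\sum_{j=1}^{m}\beta_{j}a_{i_{j}}$, now permitting some coefficients to vanish. Then $x+y=\sum_{j=1}^{m}(\alpha_{j}+\beta_{j})a_{i_{j}}$, and after deleting the zero coefficients this is the genuine expansion of $x+y$; hence $\norm{x+y}=\sum_{j}\abs{\alpha_{j}+\beta_{j}}\le\sum_{j}\abs{\alpha_{j}}+\sum_{j}\abs{\beta_{j}}=\norm{x}+\norm{y}$ by the scalar triangle inequality. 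Therefore $\norm{\cdot}$ is a norm on $E$, which we call the Hamel norm. I do not anticipate any genuine obstacle: the whole substance of the statement is the existence of the basis provided by Theorem~\ref{T:aaa000a}, after which one merely transports the standard $\ell^{1}$ norm through the coordinate isomorphism determined by $\mathcal{B}$.
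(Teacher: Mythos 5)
Your proposal is correct and follows exactly the paper's route: fix a Hamel basis via Theorem~\ref{T:aaa000a} and define $\norm{x}$ as the sum of the absolute values of the (finitely many, uniquely determined) coordinates. The only difference is that you spell out the verification of the norm axioms, including the common-support argument for the triangle inequality, which the paper leaves as ``easy to verify.''
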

\medskip

\begin{proof} Let $E$ be a nontrivial vector space over $\mathbb{C}$. By Theorem
~\ref{T:aaa000a}, $E$ has a Hamel basis called
$\mathcal{B}=(a_{i})_{i\in I}$, for any set $I$ of indices (not
necessary countable). In this basis,  every vector $x\in E$ has
the form $x=\underset{i\in I_{x}}{\sum}\alpha_{i}a_{i}$, where
$\alpha_{i}\in \mathbb{C}$ and $I_{x}\subset I$ with finite
cardinal. The family $(\alpha_{i})_{i\in I}$ with $\alpha_{i}=0$
for every $i\in I\backslash I_{x}$, is called the coordinate of
$x$. Now, define $\|x\|=\underset{i\in I_{x}}{\sum}|\alpha_{i}|$.
It is easy to verify that $\|.\|$ defines a norm on $E$ by using
the coordinate in the Hamel basis. We say that $\|.\|$ is the
Hamel norm associated to the Hamel basis $\mathcal{B}$.
\end{proof}
\medskip

Remark that any vector for the Hamel basis is with norm $1$.

\subsection{{\it Linear map and isomorphism}} A subset $E\subset\mathbb{C}^{n}$ is
called $G$-invariant if $f(E)\subset E$ for any $f\in G$; that is
$E$ is a union of orbits. For a fixed vector $x\in
\mathbb{R}^{n}\backslash\{0\}$, denote by:
 \ \\ - $F_{x}$ is an algebraic
supplement of $Ker(\Phi_{x})$ in $\mathcal{A}(G)$. It is easy to
show that $p_{x}:=dim(F_{x})=dim(E(x))\leq n$ since
$E(x)=\Phi_{x}(\mathcal{A}(G))$.\
\\ - $\mathcal{C}_{x}=(a_{1},\dots, a_{p_{x}})$ is a basis of
$F_{x}$.\ \\ - $\mathcal{B}_{x}=(b_{i})_{i\in I}$ is a Hamel basis
(Theorem~\ref{T:aaa000a}) of $Ker(\Phi_{x})$.\
\\
- $\mathcal{E}_{x}=(\mathcal{C}_{x},\mathcal{B}_{x})$ is a Hamel
basis of $\mathcal{A}(G)$. By Theorem ~\ref{T:TTT00a},
$\mathcal{A}(G)$ is provided with the Hamel norm  associated to
the basis $\mathcal{E}_{x}$.\medskip

\begin{lem}\label{L:aaaaa101001} The linear map
$\Phi_{x}:\mathcal{A}(G)\longrightarrow E(x)$ is continuous. In
particular, $Ker(\Phi_{x})$ is a closed subspace of
$\mathcal{A}(G)$.
\end{lem}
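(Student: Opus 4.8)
The plan is to show that $\Phi_{x}$ is a bounded linear operator with respect to the Hamel norm on $\mathcal{A}(G)$ constructed above, from which continuity is immediate and the closedness of the kernel follows as a one-line corollary. The crucial feature I would exploit is that the Hamel basis $\mathcal{E}_{x}=(\mathcal{C}_{x},\mathcal{B}_{x})$ was set up precisely so that every basis vector $b_{i}\in\mathcal{B}_{x}$ lies in $Ker(\Phi_{x})$. As a result, $\Phi_{x}$ annihilates all of the (possibly infinitely many) kernel coordinates and effectively depends only on the finitely many coordinates along $\mathcal{C}_{x}=(a_{1},\dots,a_{p_{x}})$.

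Concretely, I would take an arbitrary $g\in\mathcal{A}(G)$ and write it in the basis $\mathcal{E}_{x}$ as $g=\sum_{k=1}^{p_{x}}\alpha_{k}a_{k}+\sum_{i\in I_{g}}\beta_{i}b_{i}$, where $I_{g}\subset I$ is finite. Applying $\Phi_{x}$ and using that $\Phi_{x}(b_{i})=b_{i}(x)=0$ for each $i$, I obtain $\Phi_{x}(g)=\sum_{k=1}^{p_{x}}\alpha_{k}\,a_{k}(x)$. Setting $M:=\max_{1\le k\le p_{x}}\|a_{k}(x)\|$, which is a finite quantity since the index runs over the finite set $\{1,\dots,p_{x}\}$, the triangle inequality in $\mathbb{C}^{n}$ gives $\|\Phi_{x}(g)\|\le\sum_{k=1}^{p_{x}}|\alpha_{k}|\,\|a_{k}(x)\|\le M\sum_{k=1}^{p_{x}}|\alpha_{k}|$. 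By the definition of the Hamel norm one has $\|g\|=\sum_{k=1}^{p_{x}}|\alpha_{k}|+\sum_{i\in I_{g}}|\beta_{i}|\ge\sum_{k=1}^{p_{x}}|\alpha_{k}|$, and hence $\|\Phi_{x}(g)\|\le M\|g\|$. This proves that $\Phi_{x}$ is bounded, and therefore continuous.

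For the second assertion, I would simply note that $Ker(\Phi_{x})=\Phi_{x}^{-1}(\{0\})$ is the preimage of the closed set $\{0\}$ under the continuous map $\Phi_{x}$, and is therefore closed in $\mathcal{A}(G)$.

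I do not expect any genuine obstacle here. The only point requiring care is the observation that, although $\mathcal{A}(G)$ may be infinite-dimensional and the Hamel norm involves an infinite index set, the adapted basis forces $\Phi_{x}$ to factor through the finite-dimensional span of $\mathcal{C}_{x}$, so that the bounding constant $M$ is a genuine finite maximum. This is exactly the reason the algebraic supplement $F_{x}$ of $Ker(\Phi_{x})$ and its finite basis $\mathcal{C}_{x}$ were introduced before stating the lemma.
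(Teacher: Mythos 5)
Your proof is correct and follows essentially the same route as the paper: decompose $g$ in the adapted Hamel basis $\mathcal{E}_{x}=(\mathcal{C}_{x},\mathcal{B}_{x})$, use that $\Phi_{x}$ kills the $\mathcal{B}_{x}$-coordinates, and bound $\|\Phi_{x}(g)\|$ by a constant times the Hamel norm of $g$ (the paper uses the constant $\sum_{k=1}^{p_{x}}\|a_{k}(x)\|$ where you use the maximum, an immaterial difference). The closedness of the kernel as the preimage of $\{0\}$ is likewise the intended conclusion.
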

\smallskip

\begin{proof} Since $\Phi_{x}$ is linear and $\mathcal{A}(G)$ is
a normed vector space (Lemma ~\ref{T:TTT00a}), we will verify the
continuity of $\Phi_{x}$ on $0$. Let $f\in \mathcal{A}(G)$ and
write $f=f_{1}+f_{2}$ with $f_{1}\in F_{x}$ and $f_{2}\in
Ker(\Phi_{x})$. Set $(\alpha_{i})_{1\leq i\leq p_{x}}$ and
$(\beta_{i})_{i\in I}$ be respectively the coordinates of $f_{1}$
and $f_{2}$ in $\mathcal{C}_{x}$ and $\mathcal{B}_{x}$. Write
$f=\underset{i=1}{\overset{p_{x}}{\sum}}\alpha_{i}a_{i}+\underset{i\in
I_{2}}{\sum}\beta_{i}b_{i}$ where $I_{2}\subset I$ with finite
cardinal. We have $
\|f\|=\underset{i=1}{\overset{p_{x}}{\sum}}|\alpha_{i}|+\underset{i\in
I_{2}}{\sum}|\beta_{i}|$ and $b_{i}(x)=0$ for all $i\in I_{2}$.
Therefore
\begin{align*}\|\Phi_{x}(f)\|=\|f(x)\|& =\left\|\underset{i=1}{\overset{p_{x}}{\sum}}\alpha_{i}a_{i}(x)+\underset{i\in
I_{2}}{\sum}\beta_{i}b_{i}(x)\right\|\\ \ &
\leq\underset{i=1}{\overset{p_{x}}{\sum}}|\alpha_{i}|\|a_{i}(x)\|
\\ \ & \leq  \|f\|
 \underset{i=1}{\overset{p_{x}}{\sum}}\|a_{i}(x)\|\end{align*}
 Since $ \underset{i=1}{\overset{p_{x}}{\sum}}\|a_{i}(x)\|$ is
 constant relative to $f$, then $\Phi_{x}$ is continuous.
 \end{proof}
\bigskip

\begin{lem}\label{LAAL:01} Suppose that $dim(vect(L_{G}))=n$. Then the linear map
$\varphi:\mathcal{A}(G)\longrightarrow vect(L_{G})$ is continuous.
In particular, $Ker(\varphi)$ is a closed subspace of
$\mathcal{A}(G)$.
\end{lem}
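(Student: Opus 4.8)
The plan is to mirror the argument of Lemma~\ref{L:aaaaa101001}, replacing $\Phi_{x}$ by $\varphi$ and the decomposition along $Ker(\Phi_{x})$ by the decomposition along $Ker(\varphi)$. The starting observation is that $\varphi$ is linear, that $L_{G}=\varphi(G)$, and that $\mathcal{A}(G)=vect(G)$, so $\varphi(\mathcal{A}(G))=vect(\varphi(G))=vect(L_{G})$; under the hypothesis this image is the finite-dimensional space $vect(L_{G})$ of dimension $n$. Thus $\varphi:\mathcal{A}(G)\to vect(L_{G})$ is a surjective linear map with finite-dimensional target.

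Next I would choose an algebraic supplement $H$ of $Ker(\varphi)$ in $\mathcal{A}(G)$, so that $\varphi_{|H}:H\to vect(L_{G})$ is an isomorphism and $dim H=dim(vect(L_{G}))=n$. I fix a basis $(c_{1},\dots,c_{n})$ of $H$ and, by Theorem~\ref{T:aaa000a}, a Hamel basis $(d_{j})_{j\in J}$ of $Ker(\varphi)$; their union is a Hamel basis of $\mathcal{A}(G)$, and by Theorem~\ref{T:TTT00a} I equip $\mathcal{A}(G)$ with the associated Hamel norm, exactly as was done for $\mathcal{E}_{x}$. The continuity estimate then proceeds as before: writing $f=\sum_{i=1}^{n}\alpha_{i}c_{i}+\sum_{j\in J_{f}}\beta_{j}d_{j}$ with $J_{f}\subset J$ finite, we have $\|f\|=\sum_{i=1}^{n}|\alpha_{i}|+\sum_{j\in J_{f}}|\beta_{j}|$, and since $\varphi(d_{j})=0$ for all $j$, only the finite part survives, namely $\varphi(f)=\sum_{i=1}^{n}\alpha_{i}\varphi(c_{i})$. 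By the triangle inequality $\|\varphi(f)\|\le\sum_{i=1}^{n}|\alpha_{i}|\,\|\varphi(c_{i})\|\le\|f\|\sum_{i=1}^{n}\|\varphi(c_{i})\|$, and since $\sum_{i=1}^{n}\|\varphi(c_{i})\|$ is a constant independent of $f$, the map $\varphi$ is continuous. Closedness of $Ker(\varphi)=\varphi^{-1}(\{0\})$ then follows as the preimage of a closed set under a continuous map.

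The main obstacle to flag is the infinite-dimensional nature of $\mathcal{A}(G)$: Hamel norms attached to different bases need not be equivalent, so one cannot transport continuity freely between the $\mathcal{E}_{x}$-norm used in Lemma~\ref{L:aaaaa101001} and the norm used here. What rescues the argument is precisely that $\varphi$ has finite-dimensional image $vect(L_{G})$, of dimension $n$ by hypothesis, so that the kernel coordinates drop out and only the $n$ coordinates along $H$ contribute to the bound. For this reason I would take care to build the Hamel basis adapted to $Ker(\varphi)$ rather than reuse $\mathcal{E}_{x}$, so that the cancellation $\varphi(d_{j})=0$ is available term by term.
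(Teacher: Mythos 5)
Your proof is correct and follows essentially the same route as the paper: choose an algebraic supplement of $Ker(\varphi)$ (which has codimension $n$ by the hypothesis $dim(vect(L_{G}))=n$), form a Hamel basis of $\mathcal{A}(G)$ adapted to this decomposition, and bound $\|\varphi(f)\|$ by $\|f\|$ times the constant $\sum_{i}\|\varphi(c_{i})\|$ using that $\varphi$ kills the kernel coordinates. Your cautionary remark that the Hamel norm here is attached to a different basis than $\mathcal{E}_{x}$, and hence need not be equivalent to the norm used in Lemma~\ref{L:aaaaa101001}, is a genuine subtlety that the paper passes over in silence.
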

\smallskip

\begin{proof} Since $\varphi$ is linear and $\mathcal{A}(G)$ is
a normed vector space (Lemma ~\ref{T:TTT00a}), we will verify the
continuity of $\varphi$ on $0$. Firstly, see that
$cod(Ker(\varphi))=n$ is finite since $dim(vect(L_{G}))=n$. let
$F$ be an algebraic supplement to $Ker(\varphi)$ in
$\mathcal{A}(G)$, $\mathcal{C}'=(a'_{1},\dots, a'_{n})$ and
$\mathcal{B}'=(b'_{i})_{i\in J}$ are respectively the Hamel basis
of $F$ and $Ker(\varphi)$ (Lemma~\ref{T:aaa000a}). Let $f\in
\mathcal{A}(G)$ and write $f=f_{1}+f_{2}$ with $f_{1}\in F$ and
$f_{2}\in Ker(\varphi)$. Set $(\alpha_{i})_{1\leq i\leq q}$ and
$(\beta_{i})_{i\in J}$ be respectively the coordinates of $f_{1}$
and $f_{2}$ in $\mathcal{C}'$ and $\mathcal{B}'$. Write
$f=\underset{i=1}{\overset{n}{\sum}}\alpha_{i}a'_{i}+\underset{i\in
I_{2}}{\sum}\beta_{i}b'_{i}$ where $I_{2}\subset j$ with finite
cardinal. We have $
\|f\|=\underset{i=1}{\overset{n}{\sum}}|\alpha_{i}|+\underset{i\in
I_{2}}{\sum}|\beta_{i}|$ and $\varphi(b'_{i})=Db'_{i}(0)=0$ for
all $i\in I_{2}$. Therefore
\begin{align*}\|\varphi(f)\|=\|Df(0)\|& =\left\|\underset{i=1}{\overset{n}{\sum}}\alpha_{i}Da'_{i}(0)+\underset{i\in
I_{2}}{\sum}\beta_{i}Db'_{i}(0)\right\|\\ \ &
\leq\underset{i=1}{\overset{n}{\sum}}|\alpha_{i}|\|Da'_{i}(0)\| \\
\ & \leq  \|f\|
 \underset{i=1}{\overset{n}{\sum}}\|Da'_{i}(0)\|\end{align*}
 Since $ \underset{i=1}{\overset{n}{\sum}}\|Da'_{i}(0)\|$ is
 constant relative to $f$, then $\varphi$ is continuous.
 \end{proof}
\bigskip

\begin{lem}\label{L:GTM1}$($\cite{GTM}, $3.5)$ Let $E$ be a topological vector
space, and let $M$ be a closed subspace of finite codimension.
Then $E=M\oplus N$ is a topological sum, for every algebraic
complementary subspace $N$ of $M$.
\end{lem}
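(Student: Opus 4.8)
The plan is to prove that the algebraic projection $q$ onto $N$ along $M$ is continuous; once this is in hand, the projection $p = \mathrm{id}_{E} - q$ onto $M$ is automatically continuous, and the continuity of both projections is exactly what it means for the decomposition $E = M \oplus N$ to be a topological (not merely algebraic) direct sum. The whole argument is engineered to pass to the quotient $E/M$ and to exploit that, because $M$ is closed, this quotient is a \emph{Hausdorff} topological vector space of finite dimension.

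First I would record the elementary structural facts. Let $k$ be the codimension of $M$. Since $N$ is an algebraic complement of $M$, the restriction to $N$ of the canonical quotient map $\pi : E \longrightarrow E/M$ is a linear bijection $\pi|_{N} : N \longrightarrow E/M$, so in particular $\dim N = \dim(E/M) = k < \infty$. Because $M$ is closed, the quotient $E/M$ is separated, i.e. a finite-dimensional Hausdorff topological vector space. This is the only place the closedness hypothesis enters, and it is essential.

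The key step is to invert $\pi|_{N}$ continuously. Here I would invoke the classical theorem that a finite-dimensional Hausdorff topological vector space carries a unique (Euclidean) topology, so that every linear map out of such a space into an arbitrary topological vector space is continuous. Applying this to $\sigma := (\pi|_{N})^{-1} : E/M \longrightarrow E$ gives that $\sigma$ is continuous. The projection onto $N$ along $M$ then factors as $q = \sigma \circ \pi$: indeed, for $x = m+n$ one has $\pi(x) = n + M$ and $\sigma(n+M) = n = q(x)$. Being a composition of the continuous maps $\pi$ and $\sigma$, the map $q$ is continuous, hence so is $p = \mathrm{id}_{E} - q$. This yields that $(m,n) \mapsto m+n$ is a homeomorphism $M \times N \longrightarrow E$, which is the claimed topological sum.

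The main obstacle is precisely the continuity of the inverse $\sigma$, and the entire reduction is designed to funnel that obstacle into the uniqueness of the topology on a finite-dimensional Hausdorff TVS. The subtle point I would flag is that the Hausdorff hypothesis is needed only on $E/M$ and not on $E$ itself: closedness of $M$ is what forces $E/M$ to be separated, and separatedness is exactly the hypothesis the finite-dimensional uniqueness theorem demands. Were $M$ not closed, the quotient could fail to be Hausdorff, the uniqueness theorem would not apply, and the argument would break down, which explains why that hypothesis is stated.
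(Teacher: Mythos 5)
The paper gives no proof of this lemma at all: it is imported verbatim from the cited reference (\cite{GTM}, I.3.5), so there is nothing internal to compare against. Your argument is correct and complete, and it is in fact the standard proof of that result — reduce to the Hausdorff finite-dimensional quotient $E/M$ (this is where closedness of $M$ is used), invoke the uniqueness of the topology on a finite-dimensional Hausdorff topological vector space to get continuity of $\sigma=(\pi|_{N})^{-1}$, and recover the projection onto $N$ as $\sigma\circ\pi$. Your remark that the Hausdorff property is only needed for $E/M$, not for $E$, is exactly the right point to flag.
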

\medskip

\begin{cor}\label{CLC:1} The algebraic sum $\mathcal{A}(G)=F_{x}\oplus
Ker(\Phi_{x})$ is topological. In particular, $F_{x}\oplus
Ker(\Phi_{x})$  and $\mathcal{A}(G)$ are topological isomorphic by
the map: $(f_{1},f_{2})\longmapsto f_{1}+f_{2}$.
\end{cor}
\medskip

\begin{proof} By the Theorem~\ref{T:TTT00a}, $\mathcal{A}(G)$ is a
normed vector space so it is a topological vector space. By Lemma
~\ref{L:aaaaa101001}, $\Phi_{x}$ is continuous and so its kernel
is closed vector space with finite codimension. The results
follows directly by applying the Lemma~\ref{L:GTM1} for
$E=\mathcal{A}(G)$ and $M=Ker(\Phi_{x})$.
\end{proof}
\medskip
\
\\
By Corollary ~\ref{CLC:1}, we can identify $\mathcal{A}(G)$ with
$F_{x}\oplus Ker(\Phi_{x})$, so every $f\in \mathcal{A}(G)$ is
denoted by $f=(f_{1},f_{2})=f_{1}+f_{2}$ with $f_{1}\in F_{x}$ and
$f_{2}\in Ker(\Phi_{x})$.\medskip

\begin{lem}\label{LL:aaa010+} Let $H$ and $K$ be two closed vector subspaces of $\mathcal{A}(G)$
 such that $cod(H)=cod(K)=j\geq 1$. Let $\psi\in \mathcal{A}(G)\backslash (H\cup K)$. Then
there exists a vector space $F$ of $\mathcal{A}(G)$, containing
$\psi$ and satisfying $\mathcal{A}(G)=F\oplus H=F\oplus K$.
\end{lem}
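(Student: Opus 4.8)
The plan is to reduce the statement to a purely dimension-theoretic claim and then build $F$ one dimension at a time. Since $cod(H)=j$ and we want $\dim F=j$ with $F\cap H=\{0\}$, the equality $\mathcal{A}(G)=F\oplus H$ follows automatically from a codimension count: if $F\cap H=\{0\}$, the image of $F$ in the quotient $\mathcal{A}(G)/H$ is $j$-dimensional, hence equal to the whole $j$-dimensional quotient, so $F+H=\mathcal{A}(G)$. The same reasoning applies to $K$. Thus it suffices to produce a $j$-dimensional subspace $F$ containing $\psi$ with $F\cap H=\{0\}$ and $F\cap K=\{0\}$.

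I would construct a basis $\psi=v_{1},\dots,v_{j}$ of $F$ by induction on $m$, maintaining the property that $F_{m}:=vect(v_{1},\dots,v_{m})$ is $m$-dimensional and satisfies $F_{m}\cap H=F_{m}\cap K=\{0\}$. For $m=1$ set $v_{1}=\psi$; the hypothesis $\psi\notin H\cup K$ gives exactly $\mathbb{C}\psi\cap H=\mathbb{C}\psi\cap K=\{0\}$ (and in particular $\psi\neq 0$). For the inductive step, observe that an element $w+\lambda v_{m+1}$ with $w\in F_{m}$ and $\lambda\in\mathbb{C}$ can lie in $H$ only if $\lambda\neq 0$, since the case $\lambda=0$ is excluded by $F_{m}\cap H=\{0\}$; and when $\lambda\neq 0$ this forces $v_{m+1}\in H+F_{m}$. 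Hence $F_{m+1}\cap H=\{0\}$ is guaranteed as soon as $v_{m+1}\notin H+F_{m}$, and likewise $F_{m+1}\cap K=\{0\}$ as soon as $v_{m+1}\notin K+F_{m}$.

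The heart of the argument is that such a $v_{m+1}$ exists whenever $m<j$. Because $F_{m}\cap H=\{0\}$ and $\dim F_{m}=m$, the image of $F_{m}$ in $\mathcal{A}(G)/H$ is $m$-dimensional, so $cod(H+F_{m})=j-m\geq 1$; thus $H+F_{m}$ and $K+F_{m}$ are both proper subspaces of $\mathcal{A}(G)$. A vector space is never the union of two proper subspaces: taking $a$ in one minus the other and $b$ in the other minus the first, the sum $a+b$ lies in neither. Consequently $(H+F_{m})\cup(K+F_{m})\neq\mathcal{A}(G)$, and any $v_{m+1}$ outside this union completes the step. After $j$ steps, $F:=F_{j}$ is the required subspace.

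The main obstacle to watch is the bookkeeping of codimensions across the recursion: one must confirm that $H+F_{m}$ and $K+F_{m}$ remain proper at every stage, which is precisely what the running hypothesis $F_{m}\cap H=F_{m}\cap K=\{0\}$ together with the bound $m<j$ ensures. Once $F$ is obtained, the direct-sum decompositions $\mathcal{A}(G)=F\oplus H=F\oplus K$ are immediate from the dimension count, and if a topological splitting is desired one may invoke Lemma~\ref{L:GTM1}, since $H$ and $K$ are closed of finite codimension.
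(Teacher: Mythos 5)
Your proof is correct and follows essentially the same route as the paper: an inductive construction adjoining one vector at a time, chosen outside the union of the two current proper subspaces $H+F_m$ and $K+F_m$ via the standard fact that a vector space over $\mathbb{C}$ is not the union of two proper subspaces (the paper realizes this vector explicitly as $h_m=f_m+g_m$). Your bookkeeping via the invariant $F_m\cap H=F_m\cap K=\{0\}$ is in fact a bit cleaner than the paper's, which tracks the growing subspaces $H_m,K_m$ and must branch separately on whether $H_m=K_m$ at each stage, but the underlying argument is the same.
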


\medskip

\begin{proof} There are two cases;\ \\ \emph{Case 1:} If $j=1$, then we take $F=\mathbb{C}\psi$.\ \\ \emph{Case 2:} Suppose that $j\geq 2$.
Denote by $H'=H\oplus\mathbb{C}\psi$ and
$K'=K\oplus\mathbb{C}\psi$. Suppose that $H'\neq K'$ (otherwise,
it is easy to take a comment supplement) and let $f_{1}\in
H'\backslash K'$ and $g_{1}\in K'\backslash H'$, so
$h_{1}=f_{1}+g_{1}\notin H'\cup K'$. Denote by
$H_{1}=\mathbb{C}h_{1}\oplus H'$ and $K_{1}=\mathbb{C}h_{1}\oplus
K'$. We establish two cases:\
\\ - If $H_{1}=K_{1}$, then any supplement $F$ of $H_{1}$ in
$\mathcal{A}(G)$ is a supplement of $K_{1}$ in $\mathcal{A}(G)$,
the proof follows then.\ \\ - If $H_{1}\neq K_{1}$, we take
$f_{2}\in H_{1}\backslash K_{1}$ and $g_{2}\in K_{1}\backslash
H_{1}$, so $h_{2}=f_{2}+g_{2}\notin H_{1}\cup K_{1}$. Denote by
$H_{2}=\mathbb{C}h_{2}\oplus H_{1}$ and
$K_{2}=\mathbb{C}h_{2}\oplus K_{1}$.
\medskip

  We repeat the same processes until $j-2$ times and we obtain:\ \\ - If
$H_{j-2}=K_{j-2}$, then any supplement $F$ of $H_{j-2}$ in
$\mathcal{A}(G)$ is a supplement of $K_{j-2}$ in
$\mathcal{A}(G)$.\
\\ - If $H_{j-2}\neq K_{j-2}$, we take $f_{j-1}\in H_{j-2}\backslash
K_{j-2}$ and $g_{j-1}\in K_{j-2}\backslash H_{j-2}$, so
$h_{j-1}=f_{j-1}+g_{j-1}\notin H_{j-2}\cup K_{j-2}$. Denote by
$H_{j-1}=\mathbb{C}h_{j-1}\oplus H_{j-2}$ and
$K_{j-1}=\mathbb{C}h_{j-1}\oplus K_{j-2}$. We obtain then
$H_{j-1}=K_{j-1}=\mathcal{A}(G)$. Hence the proof is completed by
taking $F=vect(\psi,h_{1},\dots, h_{j-1})$.
\end{proof}
\medskip
\ \\
  Denote by
$\widetilde{r}_{x}=dim(\widetilde{E}(x))$. \medskip

\begin{lem}\label{LIL:000}  If $r(x)=\widetilde{r}_{x}$, then there exists a
commune vector space $F_{x}$ supplement to $Ker(\Phi_{x})$ and to
$ker(\varphi)$ in $\mathcal{A}(G)$. (i.e $F_{x}\oplus
Ker(\Phi_{x})=F_{x}\oplus Ker(\varphi)=\mathcal{A}(G)$).
\end{lem}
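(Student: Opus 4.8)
The plan is to read the conclusion as a direct application of the common-supplement construction in Lemma~\ref{LL:aaa010+}, taking the two closed subspaces to be $H=Ker(\Phi_{x})$ and $K=Ker(\varphi)$. First I would record that both are eligible: closedness of $H$ is Lemma~\ref{L:aaaaa101001}, and closedness of $K$ is Lemma~\ref{LAAL:01}. It then remains only to verify the two numerical hypotheses of Lemma~\ref{LL:aaa010+}, namely that $H$ and $K$ have a common finite codimension $j\geq 1$, and that there is a vector of $\mathcal{A}(G)$ lying outside $H\cup K$; once these are in place the lemma manufactures the desired $F_{x}$ automatically.

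For the codimensions, since $\Phi_{x}$ maps $\mathcal{A}(G)$ onto $E(x)$ and $\varphi$ maps it onto $vect(L_{G})$, I have $cod(H)=dim(E(x))=r(x)$ while $cod(K)$ is computed from the image of $\varphi$. The hypothesis $r(x)=\widetilde{r}_{x}$ is precisely the equality that forces these two codimensions to agree, so I set $j:=r(x)=\widetilde{r}_{x}$. That $j\geq 1$ is immediate, and the same observation supplies the required witness: the identity automorphism $id\in G\subset\mathcal{A}(G)$ satisfies $\Phi_{x}(id)=id(x)=x\neq 0$ (recall $x\neq 0$) and $\varphi(id)=D_{0}(id)=I\neq 0$. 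Hence $id\notin H$ and $id\notin K$, so neither kernel is all of $\mathcal{A}(G)$ and $\psi:=id\in\mathcal{A}(G)\backslash(H\cup K)$.

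With $H$, $K$ closed of common codimension $j\geq 1$ and $\psi\notin H\cup K$ in hand, I would apply Lemma~\ref{LL:aaa010+} to obtain a subspace $F_{x}$ containing $\psi$ with $\mathcal{A}(G)=F_{x}\oplus H=F_{x}\oplus K$, which is exactly the assertion that $F_{x}$ is simultaneously supplement to $Ker(\Phi_{x})$ and to $Ker(\varphi)$. I expect the only genuinely delicate point to be the codimension bookkeeping of the second paragraph: the identity $cod(Ker(\Phi_{x}))=r(x)$ is clear from $E(x)=\Phi_{x}(\mathcal{A}(G))$, but one must also be certain that $cod(Ker(\varphi))$ is correctly measured by $\widetilde{r}_{x}$. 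For this I would use that the derivative-evaluation map $f\mapsto D_{0}f\cdot x$ has image $\widetilde{E}(x)$ and factors through $\varphi$, so that in the present dominance framework $Ker(\varphi)$ has codimension $\widetilde{r}_{x}=dim(\widetilde{E}(x))$; verifying this identification is the crux, after which everything reduces to the formal invocation of Lemma~\ref{LL:aaa010+}.
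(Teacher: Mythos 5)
Your proposal is correct and follows essentially the same route as the paper: closedness of both kernels via Lemmas~\ref{L:aaaaa101001} and~\ref{LAAL:01}, equality of their (finite) codimensions, the identity map as the witness outside $Ker(\Phi_{x})\cup Ker(\varphi)$, and then a direct appeal to Lemma~\ref{LL:aaa010+}. The only cosmetic difference is that the paper simply asserts $cod(Ker(\Phi_{x}))=cod(Ker(\varphi))=n$ (using dominance and Corollary~\ref{CCC:1001}), whereas you derive the equality of codimensions from the hypothesis $r(x)=\widetilde{r}_{x}$ together with the identification $cod(Ker(\varphi))=\dim(vect(L_{G}))=\widetilde{r}_{x}$ via Lemma~\ref{L:0000a0a0} -- which is, if anything, slightly more careful.
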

\medskip

\begin{proof} By Lemmas ~\ref{L:aaaaa101001} and ~\ref{LAAL:01}, the maps $\Phi_{x}$ and $\varphi$ are
continuous, so  $Ker(\Phi_{x})$ and $Ker(\varphi)$ are closed.
Since $cod(Ker(\Phi_{x}))=cod(Ker(\varphi))=n$, then by
lemma~\ref{LL:aaa010+}, there exists a commune supplement $F_{x}$
to $Ker(\Phi_{x})$ and to $ker(\Phi_{x})$ in $\mathcal{A}(G)$
containing $id$ (identity map of $\mathbb{C}^{n}$), because $id\in
\mathcal{A}(G)\backslash (Ker(\Phi_{x})\cup Ker(\varphi))$.
\end{proof}
\medskip

\begin{lem}\label{LDL:000} The linear map $\Phi_{x}: F_{x}\longrightarrow E(x)$
given by $\Phi_{x}(f)=f(x)$ is an isomorphism.
\end{lem}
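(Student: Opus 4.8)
The plan is to show that $\Phi_x$ restricted to $F_x$ is a linear bijection onto $E(x)$. Since $F_x$ is by construction an algebraic supplement of $\operatorname{Ker}(\Phi_x)$ in $\mathcal{A}(G)$, this should be essentially immediate from the splitting $\mathcal{A}(G) = F_x \oplus \operatorname{Ker}(\Phi_x)$, but let me think carefully about what's being claimed. I want to prove it's an isomorphism, meaning both injective and surjective as a linear map. Let me verify these two properties separately.

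Let me check injectivity first. Suppose $f \in F_x$ with $\Phi_x(f) = f(x) = 0$. Then $f \in \operatorname{Ker}(\Phi_x)$. But $f \in F_x$ as well, and since $F_x \cap \operatorname{Ker}(\Phi_x) = \{0\}$ (because the sum $F_x \oplus \operatorname{Ker}(\Phi_x)$ is direct), I conclude $f = 0$. This gives injectivity directly.

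Now for surjectivity. I need to show every element of $E(x) = \Phi_x(\mathcal{A}(G))$ lies in $\Phi_x(F_x)$. Take any $y \in E(x)$, so $y = \Phi_x(g)$ for some $g \in \mathcal{A}(G)$. Using the direct sum decomposition from Corollary~\ref{CLC:1}, write $g = f_1 + f_2$ with $f_1 \in F_x$ and $f_2 \in \operatorname{Ker}(\Phi_x)$. Then $\Phi_x(g) = \Phi_x(f_1) + \Phi_x(f_2) = \Phi_x(f_1) + 0 = \Phi_x(f_1)$, so $y = \Phi_x(f_1)$ with $f_1 \in F_x$. This establishes surjectivity onto $E(x)$.

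I do not expect any serious obstacle here, since the statement is the standard fact that a linear map restricted to any algebraic complement of its kernel is a bijection onto its image; the only care needed is to invoke the correct decomposition. The main substantive work was already done in establishing that $F_x$ is genuinely a supplement to $\operatorname{Ker}(\Phi_x)$ (which follows from its definition and from Corollary~\ref{CLC:1}), so this lemma is a short formal consequence rather than a step requiring new ideas. I would simply combine the two paragraphs above, citing Corollary~\ref{CLC:1} for the decomposition, to conclude that $\Phi_x\colon F_x \to E(x)$ is a linear isomorphism.
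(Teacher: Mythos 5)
Your proof is correct. It differs from the paper's only in how injectivity is obtained: the paper asserts that $\Phi_{x}\colon F_{x}\to E(x)$ is linear and surjective and then concludes injectivity from the dimension count $\dim(F_{x})=\dim(E(x))=n$, whereas you derive injectivity directly from $F_{x}\cap \operatorname{Ker}(\Phi_{x})=\{0\}$, i.e.\ from the directness of the sum $\mathcal{A}(G)=F_{x}\oplus \operatorname{Ker}(\Phi_{x})$. Your surjectivity argument (decompose $g=f_{1}+f_{2}$ and note $\Phi_{x}(f_{2})=0$) is exactly what underlies the paper's surjectivity claim, so the two proofs share their core. Your route is slightly preferable: it does not invoke the equality $\dim(F_{x})=\dim(E(x))=n$, which in the paper's generality holds with $n$ replaced by $p_{x}\le n$ unless one additionally assumes $r(x)=n$ (a hypothesis not stated in the lemma itself), and it works verbatim without any finite-dimensionality of the image. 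So nothing is missing; your argument is a clean, self-contained instance of the standard fact that a linear map restricts to a bijection from any algebraic complement of its kernel onto its image.
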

\medskip

\begin{proof} Here, $F_{x}$ is considered as a supplement to $Ker(\Phi_{x})$ in $\mathcal{A}(G)$. The proof follows directly from the fact that
$\Phi_{x}$ is linear surjective and $dim(F_{x})=dim(E(x))=n$.
\end{proof}
\medskip

\begin{lem}\label{LVL:000}  If $r(x)=\widetilde{r}_{x}=n$, then the restriction
 $\varphi_{1}: F_{x}\longrightarrow vect(L_{G})$
 of $\varphi$ from $F_{x}$ unto $vect(L_{G})$ is an isomorphism.
\end{lem}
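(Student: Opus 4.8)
The plan is to mirror exactly the argument used in Lemma~\ref{LDL:000}, replacing the pair $(\Phi_{x},E(x))$ by the pair $(\varphi,vect(L_{G}))$ and invoking the common-supplement structure supplied by Lemma~\ref{LIL:000}. The setup hypothesis $r(x)=\widetilde{r}_{x}=n$ guarantees, via Lemma~\ref{LIL:000}, that the very same space $F_{x}$ is a supplement to both $Ker(\Phi_{x})$ and $Ker(\varphi)$ in $\mathcal{A}(G)$. So I am entitled to view $F_{x}$ simultaneously as a complement of $Ker(\varphi)$, which is the viewpoint needed here. First I would fix that interpretation: $\mathcal{A}(G)=F_{x}\oplus Ker(\varphi)$.

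Next I would check the three ingredients that make a linear map between equidimensional finite-dimensional spaces an isomorphism: linearity, surjectivity, and the dimension match. Linearity of $\varphi_{1}$ is immediate since it is the restriction of the linear map $\varphi$. For surjectivity, because $\varphi(\mathcal{A}(G))=vect(L_{G})$ by definition and every $f\in\mathcal{A}(G)$ decomposes as $f=f_{1}+f_{2}$ with $f_{1}\in F_{x}$ and $f_{2}\in Ker(\varphi)$, one has $\varphi(f)=\varphi(f_{1})=\varphi_{1}(f_{1})$, so $\varphi_{1}(F_{x})=\varphi(\mathcal{A}(G))=vect(L_{G})$. For the dimension match I would note that $dim(vect(L_{G}))=\widetilde{r}_{x}=n$ by hypothesis, while $dim(F_{x})=cod(Ker(\varphi))=n$ since $F_{x}$ is a complement of $Ker(\varphi)$ and that kernel has codimension $n$ (as established in the discussion preceding Lemma~\ref{LAAL:01} and in Lemma~\ref{LIL:000}). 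Hence $dim(F_{x})=dim(vect(L_{G}))=n$.

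With a linear surjection between two $n$-dimensional complex vector spaces, injectivity is automatic by the rank-nullity theorem, so $\varphi_{1}$ is an isomorphism, completing the proof.

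The only genuinely delicate point — and the step I expect to be the main obstacle — is the consistency of the notation $r(x)$ versus $\widetilde{r}_{x}$ and making sure the \emph{same} $F_{x}$ is being used in both Lemma~\ref{LDL:000} and here. The statement quietly identifies $dim(E(x))$ (governed by $\Phi_{x}$) with $dim(vect(L_{G}))$ (governed by $\varphi$), and both are asserted to equal $n$; the role of Lemma~\ref{LIL:000} is precisely to guarantee that this common value permits a single supplement to serve both kernels, so that the isomorphism $\varphi_{1}$ lives on the same domain $F_{x}$ as the isomorphism in Lemma~\ref{LDL:000}. Once that identification is in hand the argument is a routine three-line check, so I would keep the proof short and lean entirely on Lemma~\ref{LIL:000} for the structural content.
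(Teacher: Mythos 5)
Your proposal is correct and follows essentially the same route as the paper: take $F_{x}$ as the common supplement to $Ker(\varphi)$ from Lemma~\ref{LIL:000}, note that $\varphi_{1}$ is linear and surjective, and conclude by the dimension count $dim(F_{x})=dim(vect(L_{G}))=n$. One small caveat: $dim(vect(L_{G}))=n$ is not literally the hypothesis $\widetilde{r}_{x}=n$ (which concerns $\widetilde{E}(x)=vect(L_{G}(x))$, not $vect(L_{G})$); it follows from it via the isomorphism $\Psi_{x}$ of Lemma~\ref{L:0000a0a0}, i.e.\ Corollary~\ref{CCC:1001}, which is exactly what the paper cites at this point.
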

\medskip

\begin{proof} Here, $F_{x}$ is considered as a supplement to $Ker(\varphi)$ in $\mathcal{A}(G)$. The proof follows directly from the fact that $\varphi_{1}$ is linear surjective and
$dim(F_{x})=dim(vect(L_{G}))=n$ (Corollary ~\ref{CCC:1001}).
\end{proof}
\medskip

\begin{lem}\label{LL+L:000} If $r(x)=\widetilde{r}_{x}=n$ and $ker(\varphi)= Ker(\Phi_{x})$, then  the map
 $\varphi_{x}:=\widetilde{\Phi}_{x}\circ \varphi_{1}
\circ \Phi_{x}^{-1}: \mathbb{C}^{n}\longrightarrow \mathbb{C}^{n}$
defined by $\varphi_{x}(f(x))=D_{0}f(x)$, $f\in F_{x}$, is an
isomorphism and satisfying:\ \\ (i) $\varphi_{x}(G(x))= L_{G}(x)$.
\
\\ (ii) Let $y\in \mathbb{C}^{n}$ and $z=\varphi_{x}(y)$ then
$\varphi_{x}(G(y))= L_{G}(z)$.
 \ \\ (iii) for every $y\in \overline{G(x)}$ we have  $z=\varphi_{x}(y)\in
\overline{L_{G}(x)}$.
\end{lem}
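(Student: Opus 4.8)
The plan is to realise $\varphi_{x}$ as a composite of the three isomorphisms furnished by the preceding lemmas and then to transport the orbit structure through it. First I would invoke Lemma~\ref{LIL:000} to fix a single subspace $F_{x}$ that is simultaneously an algebraic supplement of $Ker(\Phi_{x})$ and of $Ker(\varphi)$ in $\mathcal{A}(G)$; on this $F_{x}$, Lemma~\ref{LDL:000} gives the isomorphism $\Phi_{x}\colon F_{x}\to E(x)=\mathbb{C}^{n}$ and Lemma~\ref{LVL:000} the isomorphism $\varphi_{1}\colon F_{x}\to vect(L_{G})$, while the evaluation map $\widetilde{\Phi}_{x}\colon vect(L_{G})\to \widetilde{E}(x)=\mathbb{C}^{n}$, $A\mapsto Ax$, is an isomorphism because $dim(vect(L_{G}))=\widetilde{r}_{x}=n$. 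Hence $\varphi_{x}=\widetilde{\Phi}_{x}\circ\varphi_{1}\circ\Phi_{x}^{-1}$ is a composite of linear isomorphisms, so it is itself a linear isomorphism of $\mathbb{C}^{n}$; unwinding the three arrows on an element $f(x)$ with $f\in F_{x}$ gives $\varphi_{x}(f(x))=\widetilde{\Phi}_{x}(\varphi(f))=D_{0}f\,(x)$, which is the asserted formula.

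The next step is an extension remark that I will use throughout: for \emph{every} $w\in\mathcal{A}(G)$ one has $\varphi_{x}(w(x))=D_{0}w\,(x)$. Indeed, writing $w=w_{1}+w_{2}$ with $w_{1}\in F_{x}$ and $w_{2}\in Ker(\Phi_{x})$, one has $w(x)=w_{1}(x)$ since $w_{2}(x)=0$, and $D_{0}w=D_{0}w_{1}$ since $w_{2}\in Ker(\Phi_{x})=Ker(\varphi)$ forces $\varphi(w_{2})=0$; applying the formula of the first paragraph to $w_{1}$ then yields $\varphi_{x}(w(x))=D_{0}w_{1}\,(x)=D_{0}w\,(x)$. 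In particular the right-hand side depends only on the point $w(x)$ and not on the chosen representative $w$, which is exactly the consistency that makes the orbit computations below unambiguous.

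Part~(i) is then immediate: for $g\in G\subset\mathcal{A}(G)$ the extension remark gives $\varphi_{x}(g(x))=D_{0}g\,(x)$, and as $g$ ranges over $G$ the matrices $D_{0}g$ range over $L_{G}=\varphi(G)$, so $\varphi_{x}(G(x))=\{D_{0}g\,(x):\ g\in G\}=L_{G}(x)$. For part~(ii), given $y\in\mathbb{C}^{n}$ I would set $f_{y}:=\Phi_{x}^{-1}(y)\in F_{x}$, so that $y=f_{y}(x)$ and $z=\varphi_{x}(y)=D_{0}f_{y}\,(x)$. For $g\in G$ write $g(y)=(g\circ f_{y})(x)$; since $g\circ f_{y}\in\mathcal{A}(G)$ and $f_{y}(0)=0$ by Lemma~\ref{L:2}, the chain rule gives $D_{0}(g\circ f_{y})=D_{0}g\cdot D_{0}f_{y}$, and the extension remark yields
\[
\varphi_{x}(g(y))=D_{0}(g\circ f_{y})\,(x)=D_{0}g\,(D_{0}f_{y}\,(x))=D_{0}g\,(z).
\]
Letting $g$ run through $G$ and using that $L_{G}=\varphi(G)$ is a group (Lemma~\ref{L:008}) gives $\varphi_{x}(G(y))=\{D_{0}g\,(z):\ g\in G\}=L_{G}(z)$. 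Finally, part~(iii) follows from~(i) by continuity: $\varphi_{x}$ is linear on the finite-dimensional space $\mathbb{C}^{n}$, hence continuous, so $\varphi_{x}(\overline{G(x)})\subset\overline{\varphi_{x}(G(x))}=\overline{L_{G}(x)}$, which is the claim for $z=\varphi_{x}(y)$ with $y\in\overline{G(x)}$.

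The step I expect to be the main obstacle is the identity $\varphi_{x}(g(y))=D_{0}g\,(z)$ in part~(ii), and more precisely the assertion that $g\circ f_{y}$ lies in $\mathcal{A}(G)$: it is this stability under composition that lets the extension remark convert the nonlinear quantity $g(y)$ into the linear one $D_{0}g\,(z)$. The entire argument rests on the hypothesis $Ker(\Phi_{x})=Ker(\varphi)$, which is what both allows $F_{x}$ to be chosen common to the two kernels and guarantees that the value $D_{0}w\,(x)$ is independent of the representative $w$ of a given point; were this kernel coincidence to fail, the passage from $g\circ f_{y}$ to $D_{0}g\cdot D_{0}f_{y}$ would no longer be well defined and the desired orbit equality could break down.
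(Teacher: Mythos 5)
Your proof is correct and follows essentially the same route as the paper's: the same realisation of $\varphi_{x}$ as the composite of the isomorphisms from Lemmas~\ref{LIL:000}, \ref{LDL:000}, \ref{LVL:000} and \ref{L:0000a0a0}, the same well-definedness argument via $Ker(\Phi_{x})=Ker(\varphi)$, and for (ii) the same choice of $g\in F_{x}$ with $y=g(x)$ followed by the chain-rule computation $\varphi_{x}(f(y))=D_{0}(f\circ g)(x)=D_{0}f(z)$. The obstacle you flag at the end --- whether $g\circ f_{y}$ actually lies in $\mathcal{A}(G)$ so that the extension remark applies, given that the linear span of $G$ need not be stable under composition --- is present and equally unaddressed in the paper's own proof, so your argument is faithful to it rather than deficient relative to it.
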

\medskip

\begin{proof} By Lemma ~\ref{LIL:000}, we can assume that $F_{x}$ is a commune
supplement to $Ker(\Phi_{x})$ and to $Ker(\varphi)$ in
$\mathcal{A}(G)$. Since $r(x)=\widetilde{r}_{x}=n$, then
$E(x)=\widetilde{E}(x)=\mathbb{C}^{n}$.\
\\ (i) $\varphi_{x}$ is well defined; Indeed, let $f\in
\mathcal{A}(G)$ and write $f=g'+h$ with $g'\in Ker(\Phi_{x})$ and
$h\in Ker(\varphi)$, so $f(x)=g'(x)$. Then
$\varphi_{x}(f(x))=\widetilde{\Phi}_{x}\circ \varphi_{1} \circ
\Phi_{x}^{-1}(g'(x))=\widetilde{\Phi}_{x}\circ
\varphi_{1}(g')=\widetilde{\Phi}_{x}(D_{0}g')=D_{0}g'(x)$. Since
$Ker(\Phi_{x})=Ker(\varphi)$ then $D_{0}h=0$, so $D_{0}f=D_{0}g'$.
It follows that $\varphi_{x}(f(x))=D_{0}f(x)$.\ \\ $\varphi_{x}$
is an isomorphisms, since $\varphi_{x}=\widetilde{\Phi}_{x}\circ
\varphi_{1} \circ \Phi_{x}^{-1}$ and  by Lemmas ~\ref{LVL:000},
~\ref{L:0000a0a0} and ~\ref{LDL:000}, it is composed by
isomorphisms.

Let $y\in \varphi_{x}(G(x))$, there exists $f \in G$ such that
$y=\varphi_{x}(f(x))$, then
$\varphi_{x}(y)=\varphi_{x}(f(x))=D_{0}f(x)\in L_{G}(x)$. For the
converse, let $y\in L_{G}(x)$, there exists $f \in G$ such that
$y=D_{0}f(x)$. Write $f=g+h$ with $g\in F_{x}$ and $h\in
Ker(\varphi)$. Therefore
$y=D_{0}g(x)+D_{0}h(x)=D_{0}g(x)=\varphi_{x}(g(x))$.  Since
$Ker(\Phi_{x})=Ker(\varphi)$, then $h\in Ker(\Phi_{x})$, so
$h(x)=0$. As $f(x)=g(x)+h(x)=g(x)$, it follows that
$\varphi_{x}(f(x))=y$. Hence, $y\in\varphi_{x}(G(x))$.

\ \\ \\ (ii) Let $y\in \mathbb{C}^{n}$ and $z=\varphi_{x}(y)$.  By
Lemma ~\ref{LDL:000}, $\Phi_{x}(F_{x})=E(x)=\mathbb{C}^{n}$, then
there exists $g\in F_{x}$ such that $y=g(x)$, so $z=D_{0}g(x)$.
Let $f\in G$ then $\varphi_{x}(f(y))=D_{0}(f\circ g)
(x)=D_{0}f\circ D_{0}g(x)=D_{0}f(z)$. Hence, $\varphi_{x}(f(y))\in
L_{G}(z)$.\ \\ For the converse, let $a\in L_{G}(z)$, there exists
$f \in G$ such that $a=D_{0}f(z)$. Then
$\varphi_{x}(f(y))=\varphi_{x}(f\circ g(x))=D_{0}(f\circ
g)(x)=D_{0}f\circ D_{0}g(x)=D_{0}f (z)=a$. Hence,
$a\in\varphi_{x}(G(y))$.

\
\\  \\ (iii) Since $y\in \mathbb{C}^{n}$, there exists $g\in F_{x}$ such that
$y=g(x)$, so $z=D_{0}g(x)$. By continuity  of $\varphi_{x}$ and by
(i) , we have $z\in \varphi_{x}(\overline{G(x)})\subset
\overline{\varphi_{x}(G(x))}=\overline{L_{G}(x)}$.
\end{proof}

\bigskip

\section{\bf The abelian case}
\bigskip

\subsection{\bf Some results for abelian linear group}
\bigskip

Let $M_{n}(\mathbb{C})$ be the set of complex square matrices of
order $n \geq 1$, and let $GL(n,\mathbb{C})$ be the group of the
invertible matrices of $M_{n}(\mathbb{C})$. Denote by\ \\ -
$\mathbb{T}_{n}(\mathbb{C})$ the set of all lower-triangular
matrices over $\mathbb{C}$, of order $n$ and with only one
eigenvalue.\ \\ - $\mathbb{T}_{n}^{*}(\mathbb{C}) =
\mathbb{T}_{n}(\mathbb{C})\cap GL(n, \mathbb{C})$ (\textit{i.e.}
the subset of matrix of $\mathbb{T}_{n}(\mathbb{C})$ having a non
zero eigenvalue), it is a subgroup of $GL(n, \mathbb{C})$.\ \\ -
$\mathbb{C}^{*}= \mathbb{C}\backslash\{0\}$ and $\mathbb{N}_{0}=
\mathbb{N}\backslash\{0\}$. \ \\ \\ Let $r\in \mathbb{N}^{*}$ and
$\eta=(n_{1},\dots,n_{r})\in\mathbb{N}_{0}^{r}$ such that
$\underset{i=1}{\overset{r}{\sum}}n_{i}=n$. Denote by: \ \\ -
$\mathcal{K}_{\eta,r}(\mathbb{C}) = \left\{M=\mathrm{diag}(
T_{1},\dots, T_{r})\in M_{n}(\mathbb{C}): \ T_{k}\in
\mathbb{T}_{n_{k}}(\mathbb{C}),\ k=1,\dots,r \right\}.$ \ \\ -
$\mathcal{K}_{\eta,r}^{*}(\mathbb{C})=\mathcal{K}_{\eta,r}(\mathbb{C})\cap
GL(n, \mathbb{C})$, it is a subgroup of $GL(n, \mathbb{C})$. \ \\
- $v^{T}$ the transpose of a vector $v\in \mathbb{C}^{n}$. \ \\ -
$\mathcal{E}_{n}=(e_{1},\dots,e_{n})$ the standard basis of
 $\mathbb{C}^{n}$.
\ \\ -$I_{n}$ the identity matrix on $\mathbb{C}^n$.
 \ \\ - $u_{0}=[e_{1,1},\dots, e_{r,1}]^{T}\in
\mathbb{C}^{n}$, where $e_{k,1}=[1,0,\dots,0]^{T} \in
\mathbb{C}^{n_{k}}, \quad 1\leq k \leq r.$

For any subset $E$ of $\mathbb{C}^{n}$ (resp.
$M_{n}(\mathbb{C})$), denote by $vect(E)$ the vector space
generated by $E$.
\medskip

\ \\ In \cite{aAhM06}, the authors proved the following
Proposition:

\begin{prop}\label{p:001}$($\cite{aAhM06}, Proposition 6.1.$)$  Let $L$ be an abelian subgroup of $GL(n, \mathbb{C})$,
 then there exists $P\in GL(n, \mathbb{C})$ such that $\widetilde{L}=P^{-1}LP$ is a subgroup of $\mathcal{K}^{*}_{\eta,r}(\mathbb{C})$,
 for some $1\leq r\leq n$ and $\eta\in \mathbb{N}_{0}^{r}$.
\end{prop}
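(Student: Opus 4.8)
The plan is to realize $\mathcal{K}^{*}_{\eta,r}(\mathbb{C})$ as exactly the group of matrices that are block-diagonal with respect to the simultaneous generalized eigenspace (primary) decomposition of $\mathbb{C}^{n}$ under $L$, each block being lower-triangular with constant diagonal. So the whole statement reduces to producing such a decomposition together with a compatible flag inside each block.

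First I would pass from the group to its linear span: let $\mathcal{A}_{L}\subseteq M_{n}(\mathbb{C})$ be the unital subalgebra generated by $L$. Because $L$ is abelian, $\mathcal{A}_{L}$ is a commutative, finite-dimensional (hence Artinian) $\mathbb{C}$-algebra. The heart of the argument is then the structure theory of such algebras: $\mathcal{A}_{L}$ splits as a finite product $\prod_{k=1}^{r}\mathcal{A}_{k}$ of local Artinian $\mathbb{C}$-algebras, and since $\mathbb{C}$ is algebraically closed each residue field $\mathcal{A}_{k}/\mathfrak{m}_{k}$ is $\mathbb{C}$, yielding an algebra character $\chi_{k}:\mathcal{A}_{L}\to\mathbb{C}$ whose kernel contains the nilpotent maximal ideal $\mathfrak{m}_{k}$. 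The orthogonal idempotents $e_{1},\dots,e_{r}$ attached to this splitting decompose $\mathbb{C}^{n}=V_{1}\oplus\cdots\oplus V_{r}$ with $V_{k}=e_{k}\mathbb{C}^{n}$, each $V_{k}$ invariant under $\mathcal{A}_{L}$ (hence under $L$), and on $V_{k}$ every $A\in\mathcal{A}_{L}$ acts as $\chi_{k}(A)\,\mathrm{Id}+N$ with $N$ nilpotent, so $A|_{V_{k}}$ has the single eigenvalue $\chi_{k}(A)$.

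Next, within each block I would simultaneously triangularize: the family $\{A|_{V_{k}}:A\in L\}$ is a commuting family of operators on the complex space $V_{k}$, so by the classical common-eigenvector and induction argument it admits a basis of $V_{k}$ in which every $A|_{V_{k}}$ is lower-triangular. Since each such operator already has the single eigenvalue $\chi_{k}(A)$, its diagonal is constant, i.e. $A|_{V_{k}}\in\mathbb{T}_{n_{k}}(\mathbb{C})$ with $n_{k}=\dim V_{k}$; and as $A\in L\subseteq GL(n,\mathbb{C})$ is invertible, $\chi_{k}(A)\neq 0$, so in fact $A|_{V_{k}}\in\mathbb{T}^{*}_{n_{k}}(\mathbb{C})$. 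Concatenating the chosen bases of $V_{1},\dots,V_{r}$ into a basis of $\mathbb{C}^{n}$ and letting $P\in GL(n,\mathbb{C})$ be the associated change-of-basis matrix, every $A\in L$ becomes $P^{-1}AP=\mathrm{diag}(T_{1},\dots,T_{r})$ with $T_{k}\in\mathbb{T}^{*}_{n_{k}}(\mathbb{C})$, that is $\widetilde{L}=P^{-1}LP\subseteq\mathcal{K}^{*}_{\eta,r}(\mathbb{C})$ for $\eta=(n_{1},\dots,n_{r})$; since $n_{k}\geq 1$ and $\sum_{k}n_{k}=n$ we obtain $1\leq r\leq n$, as required.

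The main obstacle is the decomposition step, namely producing the common block structure valid for the whole group at once rather than for one matrix at a time. The subtlety is that different elements of $L$ may carry different eigenvalues on the same block, so one cannot simply intersect individual eigenspaces; the correct invariant is the character $\chi_{k}$ of the generated algebra, and the clean justification is the splitting of the commutative Artinian algebra $\mathcal{A}_{L}$ into local factors (equivalently, the simultaneous primary decomposition of $\mathbb{C}^{n}$). If one prefers to avoid invoking Artinian structure theory, the same decomposition can be built by induction on $n$: choose a generating matrix $A_{1}$ of $\mathcal{A}_{L}$, split $\mathbb{C}^{n}$ into the generalized eigenspaces of $A_{1}$ (each $L$-invariant by commutativity), and recurse on each piece with the remaining generators until every block carries a single eigenvalue for every element of $L$.
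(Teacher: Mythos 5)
Your argument is correct. Note, however, that the paper itself gives no proof of this proposition: it is imported verbatim from \cite{aAhM06} (Proposition 6.1), so there is no in-paper argument to compare yours against. What you supply is the standard proof of this normal form: decompose the commutative finite-dimensional (Artinian) algebra generated by $L$ into local factors, use the resulting orthogonal idempotents to obtain the simultaneous primary decomposition $\mathbb{C}^{n}=V_{1}\oplus\cdots\oplus V_{r}$ on which each element acts with a single eigenvalue given by the character of that factor, and then simultaneously triangularize the commuting family within each $V_{k}$; invertibility forces the constant diagonal entry to be nonzero, landing in $\mathcal{K}^{*}_{\eta,r}(\mathbb{C})$. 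All steps are justified, and you correctly identify the one genuine subtlety (that individual eigenspace intersections do not work and one must use the character of the generated algebra, or equivalently iterate generalized eigenspace splittings over a finite generating set). The only nitpick is the phrase ``a generating matrix $A_{1}$'' in your alternative construction: the algebra need not be monogenic, but since you immediately recurse with the remaining generators this is a slip of wording rather than a gap.
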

\ \\ \\ For such matrix $P$ define $v_{0}=Pu_{0}$. Let $L$ be an
abelian subgroup of $\mathcal{K}^{*}_{\eta,r}(\mathbb{C})$. denote
by:\ \\ -
$V=\underset{k=1}{\overset{r}{\prod}}\mathbb{C}^{*}\times
\mathbb{C}^{n_{k}-1}$. One has $\mathbb{C}^{n}\backslash
V=\underset{k=1}{\overset{r}{\bigcup}}H_{k}$, where
$$H_{k}=\left\{u=[u_{1},\dots,u_{r}]^{T},\ \ u_{k}\in \{0\}\times
\mathbb{C}^{n_{k}-1},\ u_{j}\in\mathbb{C}^{n_{j}},\ j\neq k
\right\}.$$ See that each $H_{k}$ is a $L$-invariant vector space
of dimension $n-1$.
\medskip

\begin{lem}\label{lmm:01}$($\cite{Ch}, Proposition 3.1$)$ Let $L$ be an abelian subgroup of $GL(n,
\mathbb{C})$ and $u\in \mathbb{C}^{n}$. Then for every $v\in
vect(L(u))$ there exist $B\in vect(L)$ such that $Bu =v$.
\end{lem}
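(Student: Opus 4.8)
The plan is to exploit that the evaluation map $M\longmapsto Mu$ is linear, together with the elementary fact that a linear map carries the vector space generated by a set onto the vector space generated by the image of that set. Concretely, I would introduce the $\mathbb{C}$-linear map $\Psi:M_{n}(\mathbb{C})\longrightarrow \mathbb{C}^{n}$ defined by $\Psi(M)=Mu$. By the very definition of the orbit, $\Psi(L)=L(u)$, so it suffices to show that $\Psi\big(vect(L)\big)=vect\big(L(u)\big)$; the conclusion then reads off immediately, since every $v\in vect(L(u))$ is of the form $\Psi(B)=Bu$ for some $B\in vect(L)$.

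The one general fact I would record is that for any $\mathbb{C}$-linear map $\Psi$ and any subset $S$ of its domain one has $\Psi(vect(S))=vect(\Psi(S))$: the inclusion $\subseteq$ holds because $\Psi$ sends a finite linear combination of elements of $S$ to the corresponding combination of their images, and $\supseteq$ holds because $\Psi(S)\subseteq \Psi(vect(S))$ while $\Psi(vect(S))$ is a subspace. Applying this with $S=L$ gives $\Psi(vect(L))=vect(L(u))$ at once.

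Unwinding the definitions to make the argument self-contained, I would take $v\in vect(L(u))$ and write it, by definition of the generated space, as a finite combination $v=\sum_{k=1}^{p}\alpha_{k}w_{k}$ with $w_{k}\in L(u)$ and $\alpha_{k}\in\mathbb{C}$. Writing each $w_{k}=A_{k}u$ with $A_{k}\in L$ and setting $B=\sum_{k=1}^{p}\alpha_{k}A_{k}$, the matrix $B$ is a finite linear combination of elements of $L$, hence $B\in vect(L)$, and $Bu=\sum_{k=1}^{p}\alpha_{k}A_{k}u=v$, as required.

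I do not expect a genuine obstacle here: the statement is an immediate consequence of the linearity of $M\longmapsto Mu$. The only points requiring a little care are formal ones, namely that $vect(\cdot)$ means \emph{finite} linear combinations, so that $B$ is a well-defined matrix genuinely lying in $vect(L)$, and the (mild) observation that the abelian hypothesis on $L$ is not actually used for this particular assertion—it is inherited from the ambient setting of \cite{Ch} but is irrelevant to the conclusion.
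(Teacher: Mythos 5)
Your proof is correct: the paper itself gives no proof of this lemma (it is quoted from \cite{Ch} without argument), and your observation that the evaluation map $M\longmapsto Mu$ is linear and hence carries $vect(L)$ onto $vect(L(u))$ is exactly the natural justification, with the explicit computation $B=\sum_{k}\alpha_{k}A_{k}$, $Bu=v$ filling in all the details. Your side remark that the abelian hypothesis is not needed for this particular statement is also accurate; it plays a role elsewhere in \cite{Ch} but not here.
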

\bigskip

For an abelian subgroup $L$ of $L(n, \mathbb{C})$, it is called
dominant if $vect(L(x))=\mathbb{C}^{n}$ for some $x\in
\mathbb{C}^{n}$, where $L(x)=\{Ax, \ A\in G\}$.
\medskip

\begin{prop}\label{L:0100} Let $\widetilde{L}$ be an abelian linear subgroup of
$\mathcal{K}^{*}_{\eta,r}(\mathbb{C})$ with $\eta=(n_{1},\dots,
n_{r})$. Then the following assertions are equivalent:\ \\ (i)
$\widetilde{L}$ is dominant.\ \\ (ii) For every $u\in V$, we have
$vect(\widetilde{L}(u))=\mathbb{C}^{n}$. In particular,
$vect(\widetilde{L}(u_{0}))=\mathbb{C}^{n}$.\
\end{prop}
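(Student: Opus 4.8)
The plan is to reformulate the whole statement in terms of the commutative algebra $\mathcal{B}=vect(\widetilde{L})$. First I would note that, since $\widetilde{L}$ is a group, $\mathcal{B}$ is closed under products and contains $I_{n}$, and since $\widetilde{L}$ is abelian, $\mathcal{B}$ is a \emph{commutative} subalgebra of $\mathcal{K}_{\eta,r}(\mathbb{C})$. The decisive reformulation comes from Lemma~\ref{lmm:01}: for every $u$, each $v\in vect(\widetilde{L}(u))$ has the form $Bu$ with $B\in\mathcal{B}$, and conversely every $Bu$ lies in $vect(\widetilde{L}(u))$; hence $vect(\widetilde{L}(u))=\mathcal{B}u:=\{Bu:\ B\in\mathcal{B}\}$. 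With this identification the proposition becomes: $\mathcal{B}x=\mathbb{C}^{n}$ for some $x$ if and only if $\mathcal{B}u=\mathbb{C}^{n}$ for every $u\in V$. The implication (ii)$\Rightarrow$(i) is then immediate, because $u_{0}=[e_{1,1},\dots,e_{r,1}]^{T}$ has a nonzero first coordinate in every block and so $u_{0}\in V$.

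For (i)$\Rightarrow$(ii) I would argue in two steps. The preliminary step is to observe that the dominant vector must already lie in $V$: if $x\in\mathbb{C}^{n}\backslash V$ then $x\in H_{k}$ for some $k$, and since $H_{k}$ is an $\widetilde{L}$-invariant subspace we get $\widetilde{L}(x)\subset H_{k}$, hence $vect(\widetilde{L}(x))\subset H_{k}$; as $\dim H_{k}=n-1$ this contradicts dominance. So I may fix $x\in V$ with $\mathcal{B}x=\mathbb{C}^{n}$. Now let $u\in V$ be arbitrary. Since $u\in\mathbb{C}^{n}=\mathcal{B}x$, there exists $Q\in\mathcal{B}$ with $Qx=u$.

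The heart of the argument, and the step I expect to be the main obstacle, is to show that any such $Q$ is automatically invertible; this is precisely where the definition of $V$ enters. Writing $Q=\mathrm{diag}(Q_{1},\dots,Q_{r})\in\mathcal{K}_{\eta,r}(\mathbb{C})$ with each $Q_{k}\in\mathbb{T}_{n_{k}}(\mathbb{C})$ lower-triangular of common diagonal entry $q_{k}$, the block-$k$ equation $Q_{k}x_{k}=u_{k}$ gives, on the first coordinate, $q_{k}\,(x_{k})_{1}=(u_{k})_{1}$. Because $x,u\in V$ both have nonzero first coordinate in each block, $q_{k}=(u_{k})_{1}/(x_{k})_{1}\neq 0$ for all $k$, so every $Q_{k}$, and hence $Q$, is invertible. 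Finally, since $Q\in\mathcal{B}$ and $\mathcal{B}$ is commutative, $BQ=QB$ for all $B\in\mathcal{B}$, whence $\mathcal{B}u=\mathcal{B}Qx=\{(BQ)x\}=\{Q(Bx)\}=Q(\mathcal{B}x)=Q\mathbb{C}^{n}=\mathbb{C}^{n}$, the last equality using that $Q$ is invertible. This gives $vect(\widetilde{L}(u))=\mathbb{C}^{n}$ for every $u\in V$ (in particular for $u_{0}$), completing (i)$\Rightarrow$(ii).
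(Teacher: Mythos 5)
Your proposal is correct and follows essentially the same route as the paper: both reduce to Lemma~\ref{lmm:01} to produce a matrix $B\in vect(\widetilde{L})$ sending the dominant vector to an arbitrary $u\in V$, both verify invertibility of $B$ by comparing the first coordinate of each block (which is exactly where membership in $V$ is used), and both conclude by commutativity that the full orbit span is transported by the invertible $B$. Your packaging via the commutative algebra $\mathcal{B}u=vect(\widetilde{L}(u))$ merely makes explicit the commutation step that the paper leaves implicit in the identity $B(\widetilde{L}(u))=\widetilde{L}(v)$.
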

\medskip

\begin{proof} Suppose that $\widetilde{L}$ is dominant, then there is $u\in
\mathbb{C}^{n}$ such that $vect(\widetilde{L}(v))=\mathbb{C}^{n}$.
Remark that $u\in V$, since $\mathbb{C}^{n}\backslash V$ is a
union of $r$ $\widetilde{L}$-invariant vector spaces with
dimensions $n-1$ and let $v\in V$. By applying lemma~\ref{lmm:01}
on $\widetilde{L}$, there exist $B \in vect(\widetilde{L})$ such
that $Bu = v$. As $\mathcal{K}_{\eta,r}(\mathbb{C})$ is a vector
space then $Vect(L)\subset \mathcal{K}_{\eta,r}(\mathbb{C})$.
Write $u=[u_{1},\dots,u_{r}]^{T}$, $v=[v_{1},\dots,v_{r}]^{T}$
with $u_{k}=[x_{k,1},\dots, x_{k,n_{k}}]^{T},\
v_{k}=[y_{k,1},\dots,
y_{k,n_{k}}]^{T}\in\mathbb{C}^{*}\times\mathbb{C}^{n_{k}-1}$ and
$B=diag(B_{1},\dots, B_{r})$ with
$$B_{k}=\left[\begin{array}{cccc}
  \mu_{B_{k}} & \ & \ & 0 \\
  a^{(k)}_{2,1} & \ddots & \ & \ \\
  \vdots & \ddots & \ddots & \ \\
  a^{(k)}_{n_{k},1} & \dots & a^{(k)}_{n_{k},n_{k}-1} & \mu_{B_{k}}
\end{array}\right], \ \ 1\leq k \leq r$$ then $\mu_{B_{k}}x_{k,1}=y_{k,1}$, so $\mu_{B_{k}}=\frac{y_{k,1}}{x_{k,1}}\neq 0$, hence $B\in GL(n,
\mathbb{C})$. Then \ $B(\widetilde{L}(u))=\widetilde{L}(v)$. We
conclude that $vect(\widetilde{L}(v))=\mathbb{C}^{n}$. The
converse is obvious.
\end{proof}
\medskip

 Denote by:\ \\
- $\Psi_{x}: vect(L_{G}) \longrightarrow
\widetilde{E}(x)\subset\mathbb{C}^{n}$ the linear map given by
$\Psi_{x}(A)=Ax$.\
\\

\begin{lem}\label{L:0000a0a0} Let $G$ be an abelian  subgroup of $Aut(\mathbb{C}^{n})$ such that
 $0\in Fix(G)$. Then
 $\widetilde{E}(x)$ is $L_{G}$-invariant and
the linear map $\Psi_{x}: vect((L_{G})_{/\widetilde{E}(x)})
\longrightarrow \widetilde{E}(x)$ is an isomorphism, where
$(L_{G})_{/\widetilde{E}(x)}$ is the restriction of $L_{G}$ on
$\widetilde{E}(x)$.
\end{lem}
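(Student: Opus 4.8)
The plan is to treat $\Psi_x$ as evaluation at $x$ of a linear map living on the invariant subspace $\widetilde{E}(x)$, deduce surjectivity from the cited Lemma~\ref{lmm:01}, and obtain injectivity from the commutativity of $L_{G}$. First I would record two preliminary facts. Since the identity of $G$ has derivative $I_{n}\in L_{G}$, we have $x=I_{n}x\in L_{G}(x)\subset \widetilde{E}(x)$, so evaluating at $x$ a linear endomorphism of $\widetilde{E}(x)$ makes sense and lands in $\widetilde{E}(x)$. Next, $\widetilde{E}(x)$ is $L_{G}$-invariant: for $B\in L_{G}$ and a generator $Ax$ (with $A\in L_{G}$) of $\widetilde{E}(x)$ one has $B(Ax)=(BA)x\in L_{G}(x)$, because $BA\in L_{G}$ by Lemma~\ref{L:008}; linearity of $B$ then gives $B(\widetilde{E}(x))\subset \widetilde{E}(x)$. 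Consequently every element of $vect(L_{G})$ preserves $\widetilde{E}(x)$, so the restriction map $\rho:vect(L_{G})\longrightarrow vect((L_{G})_{/\widetilde{E}(x)})$, $B\longmapsto B_{/\widetilde{E}(x)}$, is a well-defined surjective linear map, and the map in the statement, $\Psi_{x}:\phi\longmapsto \phi(x)$, satisfies $\Psi_{x}\circ\rho=\Psi_{x}^{0}$, where $\Psi_{x}^{0}:vect(L_{G})\longrightarrow \widetilde{E}(x)$ is given by $B\longmapsto Bx$.

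For surjectivity I would apply Lemma~\ref{lmm:01} to the abelian group $L=L_{G}$ (abelian by Lemma~\ref{L:008}) and $u=x$: every $v\in \widetilde{E}(x)=vect(L_{G}(x))$ is of the form $Bx$ for some $B\in vect(L_{G})$, i.e. $\Psi_{x}^{0}$ is onto. Since $\Psi_{x}^{0}=\Psi_{x}\circ\rho$ and $\rho$ is onto, the map $\Psi_{x}$ is onto as well.

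The injectivity is the crux, and this is where abelianness is used essentially. Suppose $\phi\in vect((L_{G})_{/\widetilde{E}(x)})$ with $\phi(x)=0$; write $\phi=\rho(B)$ with $B=\sum_{i}\lambda_{i}C_{i}$, $C_{i}\in L_{G}$, so that $Bx=\phi(x)=0$. The key observation is that $B$ commutes with every $A\in L_{G}$: since $L_{G}$ is abelian, $C_{i}A=AC_{i}$, whence $BA=AB$. Therefore, for each generator $Ax$ of $\widetilde{E}(x)$, $B(Ax)=(BA)x=(AB)x=A(Bx)=0$, so $B$ annihilates a spanning set of $\widetilde{E}(x)$ and hence $\phi=B_{/\widetilde{E}(x)}=0$. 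Combined with surjectivity, this shows $\Psi_{x}$ is an isomorphism. The hard part is precisely this last step: without commutativity one could only conclude $Bx=0$, not that $B$ kills the entire orbit-span $\widetilde{E}(x)$; the abelian hypothesis is exactly what upgrades the pointwise vanishing of $B$ at $x$ to vanishing on all of $\widetilde{E}(x)$.
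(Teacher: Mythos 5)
Your proof is correct and follows essentially the same route as the paper's: invariance of $\widetilde{E}(x)$ from the group property of $L_{G}$, surjectivity of $\Psi_{x}$ by construction (via the cited Lemma~\ref{lmm:01}), and injectivity by using commutativity to upgrade $Bx=0$ to $B$ vanishing on all of $\widetilde{E}(x)$. Your version is somewhat more careful than the paper's (you make the restriction map $\rho$ explicit and verify that linear combinations of commuting matrices still commute), but the key step is identical.
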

\ \\
\begin{proof} By construction, $\widetilde{E}(x)$ is $L_{G}$-invariant and $\Psi_{x}$ is linear and
surjective. Let $A\in Ker(\Psi_{x})$ and $y\in \widetilde{E}(x)$.
Then there is $B\in vect((L_{G})_{/\widetilde{E}(x)})$ such that
$y=Bx$. Now, $Ay=ABx=BAx=0$, so $A=0$. Hence $\Psi_{x}$ is
injective, so it is an isomorphism.
\end{proof}
\medskip

\begin{cor}\label{CCC:1001} Let $G$ be an abelian dominant subgroup of $Aut(\mathbb{C}^{n})$ such that
 $0\in Fix(G)$. Then
 $dim(vect(L_{G}))=n$.
\end{cor}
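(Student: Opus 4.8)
The plan is to read off the result directly from Lemma~\ref{L:0000a0a0}, once the dominance hypothesis is unwound. Since $G$ is dominant, by definition there exists a point $x\in\mathbb{C}^{n}$ with $\widetilde{E}(x)=\mathbb{C}^{n}$ (this is exactly the condition that $L_{G}(x)$ generates $\mathbb{C}^{n}$). So the first step is simply to fix such an $x$ and record that $\widetilde{E}(x)$ is the whole space.

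Next I would invoke Lemma~\ref{L:0000a0a0}, which applies because $G$ is abelian with $0\in Fix(G)$. It tells us that $\widetilde{E}(x)$ is $L_{G}$-invariant and that the linear map $\Psi_{x}:vect\big((L_{G})_{/\widetilde{E}(x)}\big)\longrightarrow\widetilde{E}(x)$, $\Psi_{x}(A)=Ax$, is an isomorphism. The key observation is that, with $\widetilde{E}(x)=\mathbb{C}^{n}$, restricting $L_{G}$ to $\widetilde{E}(x)$ is no restriction at all: $(L_{G})_{/\widetilde{E}(x)}=L_{G}$ as a set of linear maps on $\mathbb{C}^{n}$, and hence $vect\big((L_{G})_{/\widetilde{E}(x)}\big)=vect(L_{G})$. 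Substituting this into the isomorphism furnished by the lemma gives that $\Psi_{x}:vect(L_{G})\longrightarrow\mathbb{C}^{n}$ is an isomorphism.

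Finally, since $\Psi_{x}$ is an isomorphism between $vect(L_{G})$ and $\mathbb{C}^{n}$, comparing dimensions yields $\dim(vect(L_{G}))=\dim(\mathbb{C}^{n})=n$, which is the assertion.

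I do not anticipate any real obstacle here: the corollary is an immediate specialization of Lemma~\ref{L:0000a0a0} to the case where the invariant subspace $\widetilde{E}(x)$ happens to be all of $\mathbb{C}^{n}$. The only point requiring a word of care is the identification $(L_{G})_{/\mathbb{C}^{n}}=L_{G}$, so that the domain of the isomorphism is genuinely $vect(L_{G})$ and not a proper restriction; once that is noted, the dimension count is forced.
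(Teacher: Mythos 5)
Your proof is correct and follows exactly the paper's argument: invoke dominance to get $x$ with $\widetilde{E}(x)=\mathbb{C}^{n}$, then apply Lemma~\ref{L:0000a0a0} to conclude that $\Psi_{x}:vect(L_{G})\longrightarrow\mathbb{C}^{n}$ is an isomorphism and compare dimensions. Your extra remark that $(L_{G})_{/\mathbb{C}^{n}}=L_{G}$ is a point the paper passes over silently, but otherwise the two proofs coincide.
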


\begin{proof} Since $G$ is dominant, then there is $x\in
\mathbb{C}^{n}$ such that $\widetilde{E}(x)=\mathbb{C}^{n}$. Then
by lemma~\ref{L:0000a0a0},  $\Psi_{x}: vect(L_{G}) \longrightarrow
\mathbb{C}^{n}$ is an isomorphism, so  $dim(vect(L_{G}))=n$.
\end{proof}
\medskip

Denote by: \ \\ - $\widetilde{\Omega}_{n}=\{x\in \mathbb{C}^{n},\
dim(vect(\widetilde{L}(x)))=n\}$.

\medskip

\begin{lem}\label{LDLL:01} Let $\widetilde{L}$ be a dominant abelian  subgroup of
$\mathcal{K}^{*}_{\eta,r}(\mathbb{C})$. Then
$\widetilde{\Omega}_{n}= V$.
\end{lem}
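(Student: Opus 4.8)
The plan is to establish the set equality by proving the two inclusions $V\subseteq \widetilde{\Omega}_{n}$ and $\widetilde{\Omega}_{n}\subseteq V$ separately, relying on Proposition~\ref{L:0100} for the first and on the $\widetilde{L}$-invariance of the hyperplanes $H_{k}$ for the second. Since $\widetilde{\Omega}_{n}$ is defined by the condition $dim(vect(\widetilde{L}(x)))=n$, each inclusion reduces to a statement about the dimension of $vect(\widetilde{L}(x))$.

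For the inclusion $V\subseteq \widetilde{\Omega}_{n}$, I would invoke Proposition~\ref{L:0100} directly. Because $\widetilde{L}$ is dominant, assertion (ii) of that proposition gives $vect(\widetilde{L}(u))=\mathbb{C}^{n}$ for every $u\in V$. Hence $dim(vect(\widetilde{L}(u)))=n$, which is precisely the membership condition for $\widetilde{\Omega}_{n}$. Thus every $u\in V$ lies in $\widetilde{\Omega}_{n}$.

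For the reverse inclusion $\widetilde{\Omega}_{n}\subseteq V$, I would argue by contraposition. Suppose $x\notin V$. Since $\mathbb{C}^{n}\backslash V=\underset{k=1}{\overset{r}{\bigcup}}H_{k}$, there is an index $k$ with $x\in H_{k}$. Because $H_{k}$ is an $\widetilde{L}$-invariant vector space, the entire orbit satisfies $\widetilde{L}(x)\subseteq H_{k}$, and consequently $vect(\widetilde{L}(x))\subseteq H_{k}$. As $dim(H_{k})=n-1$, we obtain $dim(vect(\widetilde{L}(x)))\leq n-1<n$, so $x\notin \widetilde{\Omega}_{n}$. This proves $\widetilde{\Omega}_{n}\subseteq V$, and combining the two inclusions yields $\widetilde{\Omega}_{n}=V$.

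The argument is essentially bookkeeping once Proposition~\ref{L:0100} is available, so I do not anticipate a genuine obstacle. The only point requiring care is the $\widetilde{L}$-invariance of each $H_{k}$, but this is immediate from the block lower-triangular shape of the elements of $\mathcal{K}^{*}_{\eta,r}(\mathbb{C})$: for a diagonal block $T_{k}\in \mathbb{T}_{n_{k}}(\mathbb{C})$ the first row is $(\mu,0,\dots,0)$, so whenever the first coordinate of the $k$-th input block vanishes, the first coordinate of the corresponding output block vanishes as well, keeping the image in $H_{k}$.
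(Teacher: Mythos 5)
Your proof is correct and follows essentially the same route as the paper's: the inclusion $V\subseteq\widetilde{\Omega}_{n}$ via Proposition~\ref{L:0100}, and the reverse inclusion by noting that $\mathbb{C}^{n}\backslash V$ is a union of the $\widetilde{L}$-invariant hyperplanes $H_{k}$ of dimension $n-1$. The only difference is that you explicitly verify the $\widetilde{L}$-invariance of each $H_{k}$ from the block lower-triangular form, which the paper merely asserts.
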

\ \\
\begin{proof} Since $\widetilde{L}$ is dominant, then by Proposition~\ref{L:0100},
 for every $u\in V$, $vect(\widetilde{L}(u))=\mathbb{C}^{n}$, hence
$V\subset \widetilde{\Omega}_{n}$. For the converse, let $u\in
\widetilde{\Omega}_{n}$, then $dim(vect(\widetilde{L}(u)))=n$. It
follows that $u\in V$ because $\mathbb{C}^{n}\backslash V$ is a
union of $r$ $\widetilde{L}$-invariant vector spaces of dimension
$n-1$. This completes the proof.
\end{proof}
\medskip

Denote by:\ \\ - $\widetilde{\Omega}_{k}=\{y\in \mathbb{C}^{n},\ \
\widetilde{r}_{y}\geq k\}$, for every $0 \leq k\leq n$. By
applying Lemma 3.11 given in \cite{Ch} to the abelian linear group
$L_{G}$, we found the following result:

\begin{lem}\label{L:10}$($\cite{Ch}, Lemma3.11$)$  $\widetilde{\Omega}_{k}$ is a
$L_{G}$-invariant dense open subset of $\mathbb{C}^{n}$.
\end{lem}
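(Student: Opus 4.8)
The plan is to translate the purely set-theoretic description of $\widetilde{\Omega}_{k}$ into a rank condition on a matrix whose entries depend linearly (hence polynomially) on $y$, and then read off openness and density from the elementary geometry of determinantal loci. First I would invoke Lemma~\ref{lmm:01} (Chihi's Proposition 3.1), applied to the abelian linear group $L_{G}$: it gives, for every $y\in\mathbb{C}^{n}$, the identification
$$\widetilde{E}(y)=\{By:\ B\in vect(L_{G})\},$$
so that $\widetilde{r}_{y}=\dim\bigl(vect(L_{G})\,y\bigr)$. Since $vect(L_{G})$ is a finite-dimensional subspace of $M_{n}(\mathbb{C})$, I would fix a basis $(B_{1},\dots,B_{m})$ of it; then $\widetilde{E}(y)=\mathrm{span}\{B_{1}y,\dots,B_{m}y\}$, and therefore $\widetilde{r}_{y}$ equals the rank of the $n\times m$ matrix $M(y)=[\,B_{1}y\mid\cdots\mid B_{m}y\,]$, whose entries are linear forms in the coordinates of $y$.

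For $L_{G}$-invariance I would argue that, because $L_{G}$ is a group (Lemma~\ref{L:008}), for any $A\in L_{G}$ the set $L_{G}(Ay)=\{BAy:\ B\in L_{G}\}$ coincides with $L_{G}(y)$, since $B\mapsto BA$ permutes $L_{G}$. Hence $\widetilde{E}(Ay)=\widetilde{E}(y)$ and $\widetilde{r}_{Ay}=\widetilde{r}_{y}$, so $Ay\in\widetilde{\Omega}_{k}$ exactly when $y\in\widetilde{\Omega}_{k}$.

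For openness, the set $\{y:\ \mathrm{rank}\,M(y)\ge k\}$ is precisely the locus on which at least one $k\times k$ minor of $M(y)$ is nonzero; each such minor is a polynomial in $y$, so $\widetilde{\Omega}_{k}$ is a finite union of complements of zero sets of polynomials, hence open. For density, the complement $\mathbb{C}^{n}\setminus\widetilde{\Omega}_{k}=\{y:\ \widetilde{r}_{y}\le k-1\}$ is the common zero locus of all $k\times k$ minors, an algebraic subset of $\mathbb{C}^{n}$. Because $G$, and hence $L_{G}$, is dominant, there is a point $x$ with $\widetilde{E}(x)=\mathbb{C}^{n}$, i.e. $\widetilde{r}_{x}=n\ge k$, so $\widetilde{\Omega}_{k}\neq\emptyset$; thus the minors are not all identically zero, their common zero set is a proper algebraic subvariety, and a proper complex algebraic subset is nowhere dense. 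Consequently $\widetilde{\Omega}_{k}$ is dense.

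The main obstacle is the density step: invariance and openness are formal once the rank description is in place, but density genuinely uses that a proper complex algebraic subset has empty interior, together with the nonemptiness of $\widetilde{\Omega}_{k}$ for $k\le n$, which is exactly where dominance (Corollary~\ref{CCC:1001} and the existence of $x$ with $\widetilde{E}(x)=\mathbb{C}^{n}$) enters. I would also take care to check that the identification $\widetilde{E}(y)=vect(L_{G})\,y$ coming from Lemma~\ref{lmm:01} is legitimately available for $L_{G}$ itself and not only for its conjugate normal form; since Lemma~\ref{lmm:01} is stated for an \emph{arbitrary} abelian subgroup of $GL(n,\mathbb{C})$, this is immediate.
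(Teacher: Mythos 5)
Your proof is correct, but note that the paper does not actually prove this lemma at all: it simply imports it as Lemma~3.11 of \cite{Ch}, applied to the linear group $L_{G}$, so there is no internal argument to compare against. Your self-contained route is sound and arguably preferable. The identification $\widetilde{E}(y)=vect(L_{G})\,y$ is indeed available (and is in fact elementary linear algebra, since any linear combination $\sum c_{i}A_{i}y$ equals $\bigl(\sum c_{i}A_{i}\bigr)y$; Lemma~\ref{lmm:01} states exactly this direction, so citing it is harmless but not essential, and abelianness plays no role there). The invariance argument via $L_{G}A=L_{G}$ uses only that $L_{G}$ is a group (Lemma~\ref{L:008}), and the openness argument via nonvanishing of some $k\times k$ minor of $M(y)=[B_{1}y\mid\cdots\mid B_{m}y]$ is the standard determinantal-locus argument. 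The one place where you add a hypothesis not visible in the bald statement of the lemma is density: as you correctly observe, $\widetilde{\Omega}_{k}$ dense requires $\widetilde{\Omega}_{k}\neq\emptyset$, which fails for a general abelian linear group when $k$ exceeds $\max_{y}\widetilde{r}_{y}$; you supply this from dominance of $G$ (hence of $L_{G}$), which gives $\widetilde{\Omega}_{n}\neq\emptyset$ and so $\widetilde{\Omega}_{k}\supseteq\widetilde{\Omega}_{n}\neq\emptyset$ for all $k\le n$. That is consistent with the standing hypotheses of the section (the lemma is only used in the proof of Theorem~\ref{T:1}, where $G$ is dominant), and it is a point the paper glosses over by deferring to the reference. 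In short: correct proof, genuinely different (because explicit) route, and it makes visible exactly where dominance is needed.
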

\
\\

\begin{lem}\label{L:aaaa1}$($\cite{Ch}, Theorem 3.10$)$ Let $x\in \widetilde{\Omega}_{n}$ then for every $y\in
\overline{L_{G}(x)}\cap\widetilde{\Omega}_{n}$ we have
$\overline{L_{G}(y)}\cap\widetilde{\Omega}_{n}=\overline{L_{G}(x)}\cap\widetilde{\Omega}_{n}$.
\end{lem}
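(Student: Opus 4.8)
The plan is to transport the statement to the normal form of an abelian linear group and to read it as the relative minimality of $L_{G}$-orbits on the good set, which is exactly the content of Theorem 3.10 of \cite{Ch} applied to $L_{G}$. First I would reduce to the normal form. By Proposition~\ref{p:001} there is $P\in GL(n,\mathbb{C})$ with $\widetilde{L}=P^{-1}L_{G}P\subset \mathcal{K}^{*}_{\eta,r}(\mathbb{C})$. The hypothesis $x\in\widetilde{\Omega}_{n}$ means $vect(L_{G}(x))=\mathbb{C}^{n}$, so $L_{G}$ — and hence $\widetilde{L}$ — is dominant; by Lemma~\ref{LDLL:01} the good set for $\widetilde{L}$ is $\widetilde{\Omega}_{n}=V$. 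The linear isomorphism $v\mapsto P^{-1}v$ is a homeomorphism conjugating the $L_{G}$-action to the $\widetilde{L}$-action: it carries orbits to orbits, closures to closures, and the good set for $L_{G}$ onto $V$. Hence it suffices to prove that for $x\in V$ and $y\in \overline{\widetilde{L}(x)}\cap V$ one has $\overline{\widetilde{L}(y)}\cap V=\overline{\widetilde{L}(x)}\cap V$.

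Next I would dispatch the easy inclusion. The set $\overline{\widetilde{L}(x)}$ is closed and, being the closure of a $\widetilde{L}$-invariant set under an action by homeomorphisms, is $\widetilde{L}$-invariant; by Lemma~\ref{L:10} so is $V=\widetilde{\Omega}_{n}$. Thus for $y\in \overline{\widetilde{L}(x)}$ we have $\widetilde{L}(y)\subset \overline{\widetilde{L}(x)}$, whence $\overline{\widetilde{L}(y)}\cap V\subset \overline{\widetilde{L}(x)}\cap V$. Everything therefore reduces to the reverse inclusion, equivalently to the recurrence property $x\in \overline{\widetilde{L}(y)}$.

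The hard part is this recurrence, and it is where the fine structure of $\mathcal{K}^{*}_{\eta,r}(\mathbb{C})$ enters. The strategy is to pass to the closed abelian group $\overline{\widetilde{L}}\subset GL(n,\mathbb{C})$ and to show that on the good set the orbit closure is an orbit of $\overline{\widetilde{L}}$: concretely, that $\overline{\widetilde{L}(x)}\cap V=\overline{\widetilde{L}}(x)\cap V$, the inclusion $\overline{\widetilde{L}}(x)\subset\overline{\widetilde{L}(x)}$ being automatic and the relative closedness of $\overline{\widetilde{L}}(x)$ in $V$ being the substantial point. Granting this, if $y\in \overline{\widetilde{L}(x)}\cap V$ then $y=hx$ for some $h\in\overline{\widetilde{L}}$; since $\overline{\widetilde{L}}$ is a group, $h^{-1}\in\overline{\widetilde{L}}$ and $x=h^{-1}y\in\overline{\widetilde{L}}(y)\subset\overline{\widetilde{L}(y)}$, which yields $x\in\overline{\widetilde{L}(y)}$ and hence the reverse inclusion.

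The main obstacle is precisely the relative closedness of the $\overline{\widetilde{L}}$-orbits on $V$. I would establish it through the explicit parametrization of $\overline{\widetilde{L}}$ and of the orbit closures in the block form $\mathrm{diag}(T_{1},\dots,T_{r})$, using Lemma~\ref{lmm:01} to realize any vector of $vect(\widetilde{L}(u))$ as $Bu$ with $B\in vect(\widetilde{L})$ so as to control limits coordinate block by block and to keep them inside $V$. This is exactly the analysis carried out for Theorem 3.10 of \cite{Ch}; applying it verbatim to $L_{G}$ completes the proof.
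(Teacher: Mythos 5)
First, a point of comparison: the paper itself offers no proof of this lemma --- it is imported verbatim from \cite{Ch} (Theorem 3.10) as an external result --- so there is no in-paper argument to measure your proposal against. Your reduction steps are sound: conjugating to the normal form $\mathcal{K}^{*}_{\eta,r}(\mathbb{C})$ via Proposition~\ref{p:001}, disposing of the inclusion $\overline{\widetilde{L}(y)}\cap V\subset\overline{\widetilde{L}(x)}\cap V$ by invariance of orbit closures and of $V$, and reducing everything to the recurrence $x\in\overline{\widetilde{L}(y)}$. The gap is in how you close that recurrence: you assert that the relative closedness of the $\overline{\widetilde{L}}$-orbits on $V$ ``is exactly the analysis carried out for Theorem 3.10 of \cite{Ch}'' and propose to apply it ``verbatim''. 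But the statement you are proving \emph{is} Theorem 3.10 of \cite{Ch}; invoking its internal analysis as a black box turns the argument into a citation rather than a proof, and the one substantive step remains unproved.

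That step can in fact be closed with tools already in the paper, more cheaply than by parametrizing $\overline{\widetilde{L}}$. Since $y\in\overline{L_{G}(x)}$, pick $A_{k}\in L_{G}$ with $A_{k}x\to y$. Because $x\in\widetilde{\Omega}_{n}$, Lemma~\ref{L:0000a0a0} makes $\Psi_{x}:vect(L_{G})\to\mathbb{C}^{n}$, $A\mapsto Ax$, a linear bijection between finite-dimensional spaces, hence a homeomorphism; therefore $A_{k}=\Psi_{x}^{-1}(A_{k}x)\to B:=\Psi_{x}^{-1}(y)$ in $vect(L_{G})$, and $Bx=y$. In the normal form $B$ lies in $\mathcal{K}_{\eta,r}(\mathbb{C})$, and since $y\in\widetilde{\Omega}_{n}$ corresponds to a point of $V$ (Lemma~\ref{LDLL:01}), the first entry of each block of $P^{-1}y$ equals the block eigenvalue of $P^{-1}BP$ times the corresponding nonzero entry of $P^{-1}x$; as these entries are nonzero, every block eigenvalue of $B$ is nonzero and $B\in GL(n,\mathbb{C})$. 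Continuity of inversion on $GL(n,\mathbb{C})$ then gives $A_{k}^{-1}\to B^{-1}$, hence $x=B^{-1}y=\lim A_{k}^{-1}y\in\overline{L_{G}(y)}$, and $\overline{L_{G}(x)}\subset\overline{L_{G}(y)}$ follows. Supplying this argument (or an equivalent one) is what your outline is missing; with it, the proposal becomes a complete and self-contained proof rather than a restatement of the citation the paper already makes.
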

\medskip

Denote by:\ \\ - $\Omega_{n}=\{y\in \mathbb{C}^{n},\ \
dim(E(y))=n\}$.
\medskip

\begin{lem}\label{L:LLLL} $($Under the above notations$)$ Suppose that $r(x)=\widetilde{r}_{x}=n$
 and $ker(\varphi)= Ker(\Phi_{x})$. We have $\varphi_{x}(\Omega_{n})=\widetilde{\Omega}_{n}$.
\end{lem}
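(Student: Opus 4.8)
The plan is to deduce the equality entirely from the fact that $\varphi_{x}$ is a linear isomorphism of $\mathbb{C}^{n}$ which, by Lemma~\ref{LL+L:000}(ii), carries orbits of $G$ to orbits of $L_{G}$; the key is to translate both membership conditions into statements about dimensions of spans of orbits, on which $\varphi_{x}$ acts transparently.

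First I would record the identification $E(y)=vect(G(y))$. Indeed, since $\mathcal{A}(G)$ is the vector space generated by $G$, every $f\in\mathcal{A}(G)$ is a finite linear combination $f=\sum_{k}\alpha_{k}f_{k}$ with $f_{k}\in G$, and $\Phi_{y}$ is linear with $\Phi_{y}(f_{k})=f_{k}(y)$; hence $E(y)=\Phi_{y}(\mathcal{A}(G))$ is exactly the linear span of $\{f(y):\ f\in G\}=G(y)$. In the same way, by definition $\widetilde{E}(z)=vect(L_{G}(z))$. Consequently $\dim E(y)=\dim vect(G(y))$ and $\widetilde{r}_{z}=\dim vect(L_{G}(z))$.

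Next, fix $y\in\mathbb{C}^{n}$ and put $z=\varphi_{x}(y)$. By Lemma~\ref{LL+L:000}(ii) we have $\varphi_{x}(G(y))=L_{G}(z)$. Since $\varphi_{x}$ is a linear automorphism of $\mathbb{C}^{n}$, it maps $vect(G(y))$ isomorphically onto $vect(\varphi_{x}(G(y)))=vect(L_{G}(z))$, and in particular preserves dimension; thus $\dim E(y)=\widetilde{r}_{z}$. Therefore $y\in\Omega_{n}\iff\dim E(y)=n\iff\widetilde{r}_{z}=n\iff z\in\widetilde{\Omega}_{n}$, and since $\varphi_{x}$ is a bijection this chain of equivalences is precisely the asserted equality $\varphi_{x}(\Omega_{n})=\widetilde{\Omega}_{n}$.

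I do not expect a serious obstacle here: the hypotheses $r(x)=\widetilde{r}_{x}=n$ and $Ker(\varphi)=Ker(\Phi_{x})$ enter only through Lemma~\ref{LL+L:000}, where they guarantee that $\varphi_{x}$ exists and is an isomorphism, and they play no further role in the dimension count. The only point requiring a little care is the first identification $E(y)=vect(G(y))$, which hinges on $\mathcal{A}(G)$ being merely the linear span of $G$; once this is pinned down, the passage from the orbit equality of Lemma~\ref{LL+L:000}(ii) to the equality of the spanned subspaces is immediate.
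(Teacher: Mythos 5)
Your proof is correct and follows essentially the same route as the paper: both rest on Lemma~\ref{LL+L:000}(ii) together with the linearity of $\varphi_{x}$ applied to the span of an orbit, reducing membership in $\Omega_{n}$ and $\widetilde{\Omega}_{n}$ to a dimension count that the isomorphism preserves. Your version is in fact slightly tighter, since you pin down the equality $E(y)=vect(G(y))$ (which the operative, linear-span reading of $\mathcal{A}(G)$ justifies), whereas the paper only invokes the inclusion $vect(G(y))\subset E(y)$ and leaves the two implications somewhat entangled.
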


\begin{proof} Let $y\in \Omega_{n}$ and $z=\varphi_{x}(y)$. By
Lemma~\ref{LL+L:000},(ii), $\varphi_{x}(G(y))= L_{G}(z)$. Since
$\varphi_{x}$ is linear, then $\varphi_{x}(vect(G(y))=
\widetilde{E}(z)=\mathbb{C}^{n}$, since $\widetilde{r}_{z}=n$,
where $vect(G(y))$ is the vector space generated by $G(y)$. As
$vect(G(y))\subset E(y)$, then $\varphi_{x}(E(y))=\mathbb{C}^{n}$,
so  $r(y)=n$. It follows that
 $z\in \widetilde{\Omega}_{n}$. For the converse we use the same proof for
$\varphi^{-1}_{x}$.
\end{proof}
\medskip

\subsection{{\bf Proof of Theorem ~\ref{T:1}}}

\ \\ Denote by:\ \\ - $r(x)= dim(E(x))$.\ \\ - $U_{k}=\{x\in
\mathbb{C}^{n}, \ r(x)\geq k\}$, for every $k\in \mathbb{N}$.\ \\
- $r_{G}=max\{r(x),\ \ x\in \mathbb{C}^{n} \}$.\medskip

\begin{prop}\label{p:100} Let $G$ be an abelian subgroup of
 $Aut(\mathbb{C}^{n})$, such that $0\in Fix(G)$.
Then for every $0\leq k\leq r_{G}$, $U_{k}$ is a $G$-invariant
open subset of $\mathbb{C}^{n}$.
\end{prop}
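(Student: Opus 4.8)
The plan is to verify the two required properties of $U_k$ separately: that it is $G$-invariant, and that it is open. The engine for both is the observation that the subspace $E(x)=\Phi_x(\mathcal{A}(G))$ depends only on the orbit of $x$. Concretely, I will first show
$$E(g(x))=E(x)\quad\text{for all }g\in G,\ x\in\mathbb{C}^n,$$
so that the function $r(x)=\dim E(x)$ is constant along orbits. Granting this, if $x\in U_k$ and $g\in G$, then $r(g(x))=r(x)\ge k$, hence $g(x)\in U_k$; thus $g(U_k)\subseteq U_k$ for every $g\in G$, which is exactly $G$-invariance.

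To prove $E(g(x))=E(x)$, fix $g\in G$ and consider right-composition $R_g(f)=f\circ g$. It is well defined on $\mathcal{A}(G)$: writing a general element as $f=\sum_{j=1}^{p}\alpha_j f_j$ with $f_j\in G$ (as in Lemma~\ref{L:2}), we get $f\circ g=\sum_{j=1}^{p}\alpha_j(f_j\circ g)$, and each $f_j\circ g$ lies in $G$ because $G$ is a group, so $f\circ g\in\mathcal{A}(G)$. The map $R_g$ is clearly linear, and $R_{g^{-1}}$ is a two-sided inverse, so $R_g$ is a bijection of $\mathcal{A}(G)$ onto itself. Since $f(g(x))=(f\circ g)(x)$, as $f$ ranges over $\mathcal{A}(G)$ the element $f\circ g$ ranges over all of $\mathcal{A}(G)$, whence $E(g(x))=\{(f\circ g)(x):f\in\mathcal{A}(G)\}=\{h(x):h\in\mathcal{A}(G)\}=E(x)$. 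Note that only the group structure of $G$ is used here; neither commutativity nor $0\in Fix(G)$ is needed for this proposition.

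For openness I will show that $x\mapsto\dim E(x)$ is lower semicontinuous. Fix $x_0\in U_k$, so $E(x_0)$, being the image of the linear map $\Phi_{x_0}$, is a subspace of $\mathbb{C}^n$ of dimension at least $k$; choose $f_1,\dots,f_k\in\mathcal{A}(G)$ with $f_1(x_0),\dots,f_k(x_0)$ linearly independent. Each $f_i$ is a finite linear combination of automorphisms of $\mathbb{C}^n$, hence holomorphic and in particular continuous, so the $n\times k$ matrix $M(x)=[\,f_1(x)\mid\cdots\mid f_k(x)\,]$ varies continuously with $x$. Independence of the columns of $M(x_0)$ means some $k\times k$ minor of $M(x_0)$ is nonzero; by continuity that minor stays nonzero on a neighborhood $W$ of $x_0$. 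Then for every $x\in W$ the vectors $f_1(x),\dots,f_k(x)$ are linearly independent and belong to $E(x)$, so $r(x)\ge k$ and $x\in U_k$. Hence $U_k$ is open.

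The whole argument is elementary once the identity $E(g(x))=E(x)$ is isolated, so I do not expect a serious obstacle. The one point deserving care is the verification that $R_g$ really maps $\mathcal{A}(G)$ into itself, where the closure of $G$ under composition and inverses is essential; the openness half is just the standard fact that the rank of a continuously varying family of vectors can only increase on small perturbations.
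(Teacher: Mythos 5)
Your proof is correct and follows essentially the same route as the paper: the paper establishes openness via the continuity of the Gram determinant of $f_1(z),\dots,f_r(z)$, which is the same lower-semicontinuity-of-rank argument you run with a nonvanishing $k\times k$ minor. The only real difference is that the paper merely \emph{remarks} that $r$ is constant along orbits, whereas you actually prove it via the right-composition bijection $R_g$ of $\mathcal{A}(G)$ --- a worthwhile addition, and your observation that neither commutativity nor $0\in Fix(G)$ is needed is accurate.
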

\
\\
\begin{proof} In the first, remark that the rank $r(y)$ is constant on any orbit $G(y)$,
 $y\in E(x)$. So $U_{k}$ is
$G$-invariant for  every $0\leq k \leq r_{G}$. Let's show that
$U_{t}$ is an open set: Let $y\in U_{t}$ and $r=r_{y}$, so $r\geq
t$. Then there exist $f_{1},\dots, f_{r}\in G$ such that the $r$
vectors $f_{1}(y),\dots, f_{r}(y)$ are linearly independent in
$E(y)$. For all $z\in \mathbb{C}^{n}$, we consider the Gram's
determinant $$\Delta(z)=det\left(\langle f_{i}(z)\ |\
f_{j}(z)\rangle\right)_{1\leq i,j\leq r}$$ of the vectors
$f_{1}(z),\dots, f_{r}(z)$ where $\langle. | . \rangle$ denotes
the scalar product in $\mathbb{C}^{n}$. It is well known that
these vectors are independent if and only if $\Delta(z) \neq 0$,
in particular $\Delta(y)\neq 0$. Let $$V_{y}=\left\{z\in
\mathbb{C}^{n},\ \ \Delta(z) \neq 0\right\}$$ The set $V_{y}$
  is open in $\mathbb{C}^{n}$, because the map $z\longmapsto \Delta(z)$ is continuous.
   Now $\Delta(y)\neq 0$, and so
   $y\in V_{y}\subset U_{k}$.\
     The proof is completed.
\end{proof}

\begin{proof}[Proof of Theorem~\ref{T:1}]
Since $G$ is dominant, then there is $a\in \mathbb{C}^{n}$ such
that $r(a)=\widetilde{r}_{a}=n$ and $ker(\varphi)= Ker(\Phi_{a})$,
so $\Omega_{n}\neq\emptyset$. Let $U=\Omega_{n}\cap
\widetilde{\Omega}_{n}$, since
$\overline{\Omega_{n}}=\overline{\widetilde{\Omega}_{n}}=\mathbb{C}^{n}$,
then $\overline{U}=\mathbb{C}^{n}$. Let $x\in U$ such that
$Ker(\varphi)=Ker(\Phi_{x})$ and let $y\in \overline{G(x)}\cap U$
such that. By Lemma~\ref{LL+L:000}, there exists an isomorphism
$\varphi_{x}: \mathbb{C}^{n}\longrightarrow \mathbb{C}^{n}$
satisfying $\varphi_{x}(G(x))=L_{G}(x)$ and
$\varphi_{x}(G(y))=L_{G}(z)$ with $z=\varphi_{x}(y)$.  By
Lemma~\ref{L:LLLL}, we have
$\varphi_{x}(\Omega_{n})=\widetilde{\Omega}_{n}$, so $z\in
\varphi_{x}(\overline{G(x)}\cap
\Omega_{n})=\overline{L_{G}(x)}\cap \widetilde{\Omega}_{n}$. By
Lemma ~\ref{L:aaaa1}, we have
$$\overline{L_{G}(z)}\cap\widetilde{\Omega}_{n}=\overline{L_{G}(x)}\cap\widetilde{\Omega}_{n}\
\ \ \ \ (1).$$  Therefore by (1) we obtain  \begin{align*}
\overline{G(x)}\cap\Omega_{n}&=\varphi_{x}^{-1}(\overline{L_{G}(x)})\cap\varphi_{x}^{-1}(\widetilde{\Omega}_{n})\\
\ & =
\varphi_{x}^{-1}(\overline{L_{G}(x)}\cap\widetilde{\Omega}_{n})
\\
\ &
=\varphi_{x}^{-1}(\overline{L_{G}(z)})\cap\widetilde{\Omega}_{n})\\
\ &
=\varphi_{x}^{-1}(\overline{L_{G}(z)})\cap\varphi_{x}^{-1}(\widetilde{\Omega}_{n})\\
 \ & = \overline{G(y)}\cap\Omega_{n}.
\end{align*}

We conclude that $\overline{G(x)}\cap U=\overline{G(y)}\cap U$.
 This completes the proof.
 \end{proof}
\medskip

\begin{proof}[Proof of Corollary~\ref{C:4}]
 Let $O$ be a dense orbit in $\mathbb{C}^{n}$ (i.e. $\overline{O} =\mathbb{C}^{n}$).
Then for every $x\in O$, we have $r(x)=n$, so $O\subset U$ and
$\overline{O}\cap U = U$. Since $O$ is relatively minimal in $U$
(Theorem ~\ref{T:1}), then for every orbit $L\subset U$,we have
$\overline{L}\cap U =\overline{O} \cap U$. Therefore
$\overline{L}=\overline{O} =\mathbb{C}^{n}$.
\end{proof}
\medskip

\medskip

\section{{\bf Particular cases }}
\subsection{{\bf Linear case}} Suppose that $G$ is
a dominant abelian subgroup of $GL(n, \mathbb{C})$. In this case,
we have $G=L_{G}$ and $\mathcal{A}(G)=vect(G)$. It follows that
$\mathcal{A}(G)$ has a  finite dimension. Moreover,
$G(x)=L_{G}(x)$, so the theorem\ref{T:001} becomes trivial. For
the theorem\ref{T:1}, we found the structure's theorem given in
\cite{aAhM05}, but we give an other form of the $G$-invariant open
set $U$ as follow: $$U:=\{x\in \mathbb{C}^{n},\ \ A\in vect(G)\
\mathrm{with }\ Ax=0\ \Longrightarrow\ \ A=0\}.$$ This means that
$$U:=\{x\in \mathbb{C}^{n},\ \ Ker(\Phi_{x})=\{0\}\}.$$

\medskip

\subsection{{\bf Affine case}} Suppose that $G$ is
a dominant abelian group of affine maps of $\mathbb{C}^{n}$ fixing
a commune point. Denote by $Fix(G)$ the set of all commune fixed
points of all elements of $G$, so $Fix(G)\neq \emptyset$. In this
case, every $f\in G$ has the form: $f(x)=Ax+b$ with $A\in GL(n,
\mathbb{C})$ and $b\in \mathbb{C}^{n}$. Write $f:=(A, b)$ and we
have $L_{G}=\{A,\ \ (A, b)\in G\}$ (i.e. $L_G$ is the group of
linear part of all $f\in G$). In \cite{AY}, (Proposition 3.1), the
authors proved that $G$ is $\mathcal{T}$-conjugate (conjugation by
translation) to $L_{G}$, so there exists a translation $T$ of
$\mathbb{C}^{n}$ such that $G(x)=T(L_G(T^{-1}x))$. But by
theorem\ref{T:001}, we give a linear isomorphism $\varphi_x$
satisfying $G(x)=\varphi_x(L_G(x))$. Here,
$\mathcal{A}(G)=vect(G)$. It follows that $\mathcal{A}(G)$ has a
finite dimension. For the theorem\ref{T:1}, we found the
structure's theorem given in \cite{AY}, but we give an other form
of the $G$-invariant open set $U$ as follow: $$U:=\{x\in
\mathbb{C}^{n},\ \ (A,b)\in vect(G)\ \mathrm{with }\ Ax+b=0\
\Longrightarrow\ \ A=0\}.$$ This means that $$U:=\{x\in
\mathbb{C}^{n},\ \ Ker(\Phi_{x})=\{0\}\}.$$

\medskip

\bibliographystyle{amsplain}
\vskip 0,4 cm

\end{document}